\numberwithin{equation}{section}
\newtheorem{theorem}{Theorem}[section]
\newtheorem{rem}[theorem]{Remark}
\newtheorem{defn}[theorem]{Definition}
\newtheorem{lemma}[theorem]{Lemma}
\newtheorem{prop}[theorem]{Proposition}
\newtheorem{conjecture}[theorem]{Conjecture}
\newtheorem{example}[theorem]{Example}
\def\det{\mathop{\rm det}\nolimits}
\def\d{\partial}
\def\cR{{\mathcal R}}
\def\cE{{\mathcal E}}
\def\cC{{\mathcal C}}
\let\ep=\varepsilon
\def\bC{{\mathbb C}}
\def\bR{{\mathbb R}}
\def\bD{{\mathbb D}}
\def\z{{\zeta}}
\title[ PSH Symmetrizations]
{The Lelong number, the Monge-Amp\`ere mass and  the Schwarz symmetrization of plurisubharmonic functions}
\author{Long LI}
\address{Science Institute, University of Iceland, Reykjavik, Iceland.}
\email{longli@hi.is}
\begin{document}
\maketitle 

\begin{abstract}
The aim of this paper is to study the Lelong number, the integrability index and the  Monge-Amp\`ere mass at the origin 
of an $S^1$-invariant plurisubharmonic function on a balanced domain in $\bC^n$ under the Schwarz symmetrization. 
We prove that $n$ times the integrability index is exactly 
the Lelong number of the symmetrization,
and if the function is further toric with a single pole at the origin, 
then the Monge-Amp\`ere mass is always decreasing under the symmetrization. 
\end{abstract}

\section{Introduction}
Let $\Omega\subset \bC^n$ be a bounded domain containing the origin $\underline{O} $, 
and $u$ be a plurisubharmonic function defined on $\Omega$. 
Assume that the pluri-polar set $\{ u = -\infty \}$ is non-empty in $\Omega$. 
Then we are interested to study the singularity of $u$ at the origin.
In general, there are three useful quantities to characterise this singularity. 

First, the Lelong number of $u$ at the origin is defined as 
$$ \nu_u(0) = \liminf_{z\rightarrow 0} \frac{ u(z) }{\log |z|};$$
this is the supreme of all numbers $\gamma \geq 0$ such that 
$$ u(z) \leq \gamma \log |z| + O(1) $$
near the origin. Moreover, one can show 
$$ \nu_{u}(0) = \lim_{r\rightarrow 0}  \frac{1} { a_{2n-2}r^{2n-2} }\int_{B_r} \frac{\Delta u}{2\pi} , $$
where $\frac{1}{2\pi} \Delta u$ is the Riesz measure of $u$, and  $a_N$ is the volume of the unit ball in $\bR^N$.

The second quantity is the integrability index of $u$ at the origin,
and it is defined as 
$$ \iota_u(0) = \inf\{ r >0; \ \ \  e^{-\frac{2u}{r}} \in L^1_{loc}(\underline{O})    \}. $$
If we assume that $u$ is not identically equal to $-\infty$ near the origin, 
then $\iota_u(0)$ will take its value in $[0, +\infty)$. 
According to Demailly and Koll\'ar \cite{DK}, 
the inverse of $\iota_u(0)$ is named as the \emph{complex singularity exponent} of $u$ at the origin, 
and the following sharp estimate is obtained from Skoda's work \cite{Sko}
\begin{equation}
\label{intro-000}
 \frac{1}{n} \nu_u(0) \leq \iota_u(0) \leq  \nu_u(0). 
\end{equation}

The third quantity is the \emph{residue Monge-Amp\`ere mass} of $u$ at the origin defined as 
\begin{equation}
\label{intro-001}
 \tau_{u}(0) = (dd^c u)^n|_{\{ \underline{O} \}}, 
 \end{equation}
whenever the RHS of equation (\ref{intro-001}) is well defined. 
There are many cases in which this residue mass can not make any sense.  
However, it was shown that the Monge-Amp\`ere measure $(dd^c u)^n$ 
is always well defined provided that the polar set of $u$  is contained in a compact subset $K\subset\Omega$ \cite{Dem3}.

There are many beautiful works to describe properties of these quantities or relation between them. 
The purpose of this paper is to study how these quantities change under certain symmetrization process, 
when the plurisubharmonic function is also $S^1$-invariant.  

At this moment, we identify $\bC^n$ as $\bR^{2n}$,
and then $\Omega$ is a bounded, open and connected set in the real space.
The \emph{Schwarz symmetrization} of a real valued measurable function $u$ on $\Omega$
is a radial function $\hat u(x) = f(|x|)$, 
with $f$ non-decreasing and equimeasurable with $u$. 
That is to say, for each $t\in \bR$ we have 
$$ |\{ u <t \}| = |\{\hat u <t \}|.  $$

Back to the complex setting,
one can ask the question whether the Schwarz symmetrization of a plurisubharmonic function  is still plurisubharmonic. 
Unfortunately, this is not always the case, and any general Green kernel on the unit disk will do a counter-example \cite{BB}. 
However, Berman and Berndtsson (Theorem (2.3), \cite{BB}) confirm this question when the plurisubharmonic function is also $S^1$-invariant.

Assume further that  $\Omega $ is a \emph{balanced domain} in $\bC^n$.
Consider the following $S^1$-action for any point $z\in \Omega$ as 
$$z \rightarrow e^{i\theta}z = (e^{i\theta}z_1, \cdots, e^{i\theta}z_n),  $$
for all $\theta\in\bR$. 
Then a function $f(z)$ is called \emph{$S^1$-invariant } if $f(e^{i\theta} z) = f(z)$ for every $z\in \Omega$ and all $\theta\in\bR$.

Based on Berman-Berndtsson's result, our results are presented as follows. 
For simplicity, the domain $\Omega$ will always be taken as the unit ball $B\subset\bC^n$ in the statement. 

\begin{theorem}
\label{intro-main}
Let $u$ be an $S^1$-invariant plurisubharmonic function on the unit ball $B$, 
which can be extended invariantly to a slightly larger ball $B_{1+\delta}$. Let $\hat u$ be its Schwarz symmetrization. 
Then its Lelong number and integrability index both reach their maximums at the origin, i.e. 
we have 
\begin{equation}
\label{intro-002}
\nu_{u}(0) = \max_{x\in B} \nu_u(x); \ \ \ 
\iota_u(0) = \max_{x\in B} \iota_u(x).
\end{equation}
In particular, the following formula holds: 
\begin{equation}
\label{intro-003}
\iota_u(0) = \frac{\nu_{\hat u} (0)}{n} = \lim_{t\rightarrow -\infty} \frac{2t}{\log \left|    \{ u<t \}  \right|}.
\end{equation}
\end{theorem}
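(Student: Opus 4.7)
My plan is to split the theorem into two largely independent parts: I would first prove the maximum statements (\ref{intro-002}) as a structural fact about $S^1$-invariant plurisubharmonic functions, and then derive the chain of equalities (\ref{intro-003}) by combining Berman--Berndtsson, the equimeasurability of $u$ and $\hat u$, and one-variable convex analysis of the radial function $\hat u$.

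For (\ref{intro-002}) I would first invoke Siu's theorem to identify the super-level set $E_c := \{x\in B : \nu_u(x) \ge c\}$ as an analytic subvariety of $B$, and express $\{x\in B:\iota_u(x) > c\}$ as the zero locus $V(\mathcal{J}(u/c))$ of the coherent Nadel multiplier ideal sheaf $\mathcal{J}(u/c)$. Since $u$ is $S^1$-invariant, both $E_c$ and $V(\mathcal{J}(u/c))$ are $S^1$-invariant analytic subvarieties of the balanced domain $B$. I would then prove the key structural lemma: any $S^1$-invariant analytic subvariety $V\subset B$ is a $\bC$-cone through the origin. To see this, for $x\in V\setminus\{0\}$ the intersection $V\cap \bC x$ is an analytic subset of the disk $\bC x\cap B$ that contains the circle $S^1\cdot x$; since a circle is not discrete, the standard dichotomy for analytic subsets of a one-dimensional complex manifold forces $V\cap\bC x = \bC x\cap B$, so in particular $0\in V$. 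Applying this with $V = E_c$ and then with $V = V(\mathcal{J}(u/c'))$ for $c'\uparrow c$ would give $\nu_u(0)\ge \nu_u(x)$ and $\iota_u(0)\ge \iota_u(x)$ for every $x\in B$, establishing (\ref{intro-002}).

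For (\ref{intro-003}) I would first use Berman--Berndtsson to get that $\hat u$ is plurisubharmonic. Being radial, $\hat u(z) = \phi(\log|z|)$ with $\phi$ convex non-decreasing, so two elementary calculations would yield $\nu_{\hat u}(0) = \lim_{s\to -\infty}\phi(s)/s$ and, via polar coordinates and layer cake, $\iota_{\hat u}(0) = \nu_{\hat u}(0)/n$. Since $u\not\equiv -\infty$, $|\{u=-\infty\}|=0$, hence $|\{\hat u=-\infty\}|=0$, and radial monotonicity confines the singularity of $\hat u$ to the origin. Equimeasurability gives $\int_B e^{-2u/c}\,dV = \int_B e^{-2\hat u/c}\,dV$ for all $c > 0$, so the global $L^1$-integrability threshold of $e^{-2u/c}$ on $B$ coincides with $\iota_{\hat u}(0)$. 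Combined with the maximum principle from step one this forces
\[
\iota_u(0) \;=\; \max_{x\in B}\iota_u(x) \;=\; \iota_{\hat u}(0) \;=\; \frac{\nu_{\hat u}(0)}{n}.
\]
The last equality in (\ref{intro-003}) would then follow by writing $\hat u(z) = f(|z|)$, using $|\{u<t\}| = |\{\hat u<t\}| = a_{2n}(f^{-1}(t))^{2n}$ from equimeasurability, and substituting into $\nu_{\hat u}(0) = \lim_{r\to 0^+}f(r)/\log r$.

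The main technical point I expect will require real care is the analyticity of $\{\iota_u \ge c\}$, which I would obtain as the decreasing intersection $\bigcap_{c'<c}V(\mathcal{J}(u/c'))$ via Noetherianity of the coherent sheaves $\mathcal{J}(u/c')$ (local stabilization of the ascending chain as $c'\uparrow c$), together with verifying that $S^1$-invariance of $u$ yields the $S^1$-equivariance of $\mathcal{J}$ needed for the cone argument. A related subtlety, and the reason I would prove the maximum principle before the formula, is that equimeasurability only controls the global $L^1$-norm on $B$ rather than the purely local integrability at the origin; without step one, another singularity of $u$ could in principle raise the global threshold above $\iota_u(0)$.
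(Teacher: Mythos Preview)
Your proposal is correct and, for part~(\ref{intro-003}), essentially coincides with the paper's argument: both use Berman--Berndtsson to get $\hat u$ plurisubharmonic and radial, the one-variable convex calculation $\nu_{\hat u}(0)=n\,\iota_{\hat u}(0)$ (the paper's Appendix), equimeasurability to match the global integrability thresholds on $B$, and the maximum statement from part~(\ref{intro-002}) to pass from the global threshold back to $\iota_u(0)$.

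For part~(\ref{intro-002}), however, your route is genuinely different. The paper argues \emph{locally and geometrically}: using that $u$ is radial and non-decreasing on each disk $D_z=\{\lambda z\}$, it compares $\max_{B_r(sz)}u$ with $\max_{B_{r/s}(z)}u$ (resp.\ the volumes $|\{u<t\}\cap B_r(sz)|$ and $|\{u<t\}\cap B_{r/s}(z)|$) to prove that $\nu_u$ is non-increasing and $\cC_u$ non-decreasing along each radial segment $\mathfrak{l}_z$, then shows both are constant on the circle $\partial D_z$, and only at the very end invokes Siu (resp.\ Demailly--Koll\'ar semicontinuity) to absorb the origin. You instead invoke the heavy analyticity results \emph{up front}: Siu for $\{\nu_u\ge c\}$ and Nadel coherence for $V(\cJ(u/c))=\{\iota_u\ge c\}$ (note: $\ge$, not $>$, by openness; your later intersection over $c'<c$ is then unnecessary, though harmless), and reduce everything to the clean structural lemma that an $S^1$-invariant analytic set in $B$ meeting $\bC x\setminus\{0\}$ must contain all of $\bC x\cap B$, hence the origin. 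Your argument is shorter and more conceptual; the paper's buys the finer information that $\nu_u$ and $\iota_u$ are actually monotone along every radial segment, not merely maximized at $0$, and keeps the ingredients slightly more elementary for the $\nu_u$ case.
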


This main result will be proved through Proposition (\ref{prop-S1-002}), Theorem (\ref{thm-main}) and Theorem (\ref{thm-main2}) in later sections. 
The key observation is a simple fact. 
Let $\mathfrak{l}_z = \{sz \}, s\in[0,1]$ be the line segment 
connecting the origin  and a point $z\in B$.
Then $u$ must be non-decreasing along this line segment,
since its restriction on the holomorphic disk $D_z = \{ \lambda z\}, \lambda\in \bD$ is a radial, subharmonic function \cite{BB}.  

Notice that the symmetrization $\hat u: B \rightarrow \bR$ is a radial, non-decreasing plurisubharmonic function, 
with a single pole at the origin. In this case, 
it is well known (Proposition (\ref{prop-app-001}), Appendix) that the residue Monge-Amp\`ere mass is exactly  the $n$th-power of the Lelong number
at the origin, i.e. we have 
\begin{equation}
\label{intro-004}
\tau_{\hat u} (0) = [\nu_{\hat u} (0)]^n. 
\end{equation}
In particular, we have $\tau_{\hat u} (0) = 0$ if $\nu_u (0) =0$ from equation (\ref{intro-000}). 

More generally, the residue mass  of plurisubharmonic functions with toric symmetry was studied by Rashkoskii \cite{Rash1}. 
That is to say, $u(z)$ is invariant under the following $(S^1)^{\times n}$-action on a balanced Reinhardt domain $\Omega$ 
$$ z\rightarrow (e^{i\theta_1}z_1, \cdots, e^{i\theta_n}z_n), $$
for all $\theta_j \in \bR$ and $j =1,\cdots, n$. 

Furthermore, Rashkoskii \cite{Rash} also found 
a lower bound of the residue mass,  in terms of the so called \emph{refined Lelong numbers}
for plurisubharmonic functions with a single pole at the origin. 

For any vector $a\in \bR_+^n$,
the refined Lelong number of $u$ at the origin, introduced by Kiselman \cite{Kis3}, is 
defined as 
\begin{eqnarray}
\label{intro-0040}
\nu_u(0, a)&=& \lim_{t\rightarrow -\infty} t^{-1} \sup\{ u(z);\ \ |z_k|\leq e^{a_k t},\ 1\leq k \leq n  \}
\nonumber\\
&=& \lim_{t\rightarrow -\infty} t^{-1} T_u(0, ra),
\end{eqnarray}
where $T_u(0, b)$ is the mean value of $u$ over the set $\{ z;\ \ |z_k | = e^{b_k}, \ 1\leq k \leq n \}$ for any $b\in \bR_+^n$.

Based on these results, our identity (equation (\ref{intro-003}))
implies the following domination phenomenon of residue masses under the symmetrization.  

\begin{theorem}[Theorem (\ref{thm-main2})]
\label{intro-main2}
Let $u$ be a toric plurisubharmonic function on the unit ball $B$ with a single pole at the origin, 
which can be extended invariantly to a slightly larger ball $B_{1+\delta}$. Let $\hat u$ be its Schwarz symmetrization. 
Then we have 
$$ \tau_{\hat u} (0) \leq \tau_u(0). $$
\end{theorem}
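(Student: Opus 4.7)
The plan is to reduce the inequality, via Theorem~\ref{intro-main} and identity~(\ref{intro-004}), to a convex-geometric estimate on the refined Lelong numbers of $u$, and then close it with AM--GM. Since every toric function is in particular $S^1$-invariant, Theorem~\ref{intro-main} applies and gives $\nu_{\hat u}(0) = n\,\iota_u(0)$. The symmetrization $\hat u$ is radial and plurisubharmonic with a single pole at the origin, so identity~(\ref{intro-004}) yields
\[
\tau_{\hat u}(0) \;=\; \nu_{\hat u}(0)^n \;=\; n^n\,\iota_u(0)^n,
\]
and the theorem reduces to showing $\tau_u(0) \ge n^n\,\iota_u(0)^n$.

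Writing $u(z) = \Phi(\log|z_1|, \ldots, \log|z_n|)$ with $\Phi$ convex on $\bR_-^n$, the refined Lelong numbers assemble into a concave, positively $1$-homogeneous function $M(a) := \nu_u(0, a)$ on $\bR_+^n$. A Reinhardt change of coordinates $t_k = \log|z_k|$ in $\int_B e^{-2u/c}\,dV$ followed by a ray-wise Laplace asymptotic gives the toric integrability-index formula
\[
\iota_u(0) \;=\; \max_{a \in \Delta^{n-1}} M(a),
\]
where $\Delta^{n-1} = \{a \in \bR_+^n : \sum_k a_k = 1\}$; the single-pole hypothesis forces $M\equiv 0$ on $\partial\Delta^{n-1}$ (along rays that hit a non-polar coordinate hyperplane, $\Phi(ta)/t\to 0$), so the maximum is attained at an interior point $a^\star$ with all $a^\star_k>0$. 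On the other side, Rashkovskii's formula~\cite{Rash1} for the residue mass of toric psh with a single pole at the origin gives
\[
\tau_u(0) \;=\; n!\,\Vol\bigl(\bR_+^n \setminus \Gamma_u\bigr), \qquad
\Gamma_u := \{x \in \bR_+^n : \langle x, a\rangle \ge M(a)\ \forall\, a \in \bR_+^n\}.
\]
Setting $c := \iota_u(0) = M(a^\star)$, the simplex $S := \{x \in \bR_+^n : \langle x, a^\star\rangle < c\}$ lies in $\bR_+^n \setminus \Gamma_u$, because any $x \in S$ violates the defining inequality of $\Gamma_u$ with the witness $a = a^\star$. One computes $\Vol(S) = c^n/(n!\,a^\star_1 \cdots a^\star_n)$, and AM--GM under $\sum_k a^\star_k = 1$ yields $a^\star_1\cdots a^\star_n \le n^{-n}$, hence
\[
\tau_u(0) \;\ge\; n!\,\Vol(S) \;\ge\; n^n c^n \;=\; \tau_{\hat u}(0),
\]
as required.

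The non-routine input is Rashkovskii's residue-mass formula: one has to verify that his Newton-body normalization matches the $\Gamma_u$ used above, and that the Reinhardt integration of $e^{-2u/c}$ indeed produces the formula $\iota_u(0) = \max_{\Delta^{n-1}} M$ in our setting. These identifications carry the analytic weight of the argument; once they are in place, the reduction through Theorem~\ref{intro-main} is automatic and the AM--GM closure is a one-line convex-geometric fact, sharp precisely when $a^\star=(1/n,\ldots,1/n)$, i.e.\ when $u$ is already essentially radial.
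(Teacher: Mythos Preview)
Your proof is correct and follows essentially the same route as the paper: reduce via $\tau_{\hat u}(0)=n^n\iota_u(0)^n$, invoke Kiselman's identity $\iota_u(0)=\sup_{\sum a_j=1}\nu_u(0,a)$ for toric functions, bound $\tau_u(0)$ from below by a Rashkovskii-type estimate, and close with AM--GM.

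The only substantive difference is in the Rashkovskii input. The paper quotes directly the inequality $\tau_u(0)\ge \nu_u(0,a)^n/(a_1\cdots a_n)$ from \cite{Rash} (valid for any plurisubharmonic function with a single pole), and then AM--GM is immediate. You instead invoke the Newton-body volume formula $\tau_u(0)=n!\,\Vol(\bR_+^n\setminus\Gamma_u)$ and recover the same inequality by including the simplex $\{x:\langle x,a\rangle<\nu_u(0,a)\}$ in the complement of $\Gamma_u$. This is a perfectly valid detour---indeed it is how Rashkovskii's inequality is proved in the toric case---but it makes the argument slightly longer and, as you note yourself, leaves the normalization of the Newton body to be checked. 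One further remark: your claim that the single-pole hypothesis forces $M\equiv 0$ on $\partial\Delta^{n-1}$ and hence interior attainment is correct but unnecessary; the simplex bound $\tau_u(0)\ge \nu_u(0,a)^n/(a_1\cdots a_n)\ge n^n\nu_u(0,a)^n$ holds for every interior $a$, and taking the supremum over such $a$ already gives $\tau_u(0)\ge n^n\iota_u(0)^n$ without any attainment.
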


Several examples for toric plurisubharmonic functions are presented in the last section. 
One can see that the residue Monge-Amp\`ere mass at the origin 
is always decreasing under the symmetrization, 
whenever it is well defined. More interestingly, 
even if it is not well defined as in Kiselman or Cegrell's examples (\cite{Kis1}, \cite{Ceg2}),
we can still compute the residue mass after taking the symmetrization. 

Finally, one conjecture is made, and we expect that this domination phenomenon 
also occurs for all $S^1$-invariant plurisubharmonic functions. 

\bigskip

\textbf{Acknowledgement}:
The author is very grateful to Prof. Xiuxiong Chen and Prof. P\u aun for their continuous encouragement 
in mathematics, and he would like to thank Prof. Demailly, Prof. Sigurðsson and Prof. Lewandowski  for many useful discussions. 
Moreover, he also wants to thank Prof. Rashkovskii who gave many helpful comments on the draft of this paper.

\section{Preliminaries }

\subsection{The increasing rearrangement}
Let $E$ be a (Lebesgue) measurable subset of $\mathbb{R}^N$, and we denote its $N$-dimensional (Lebesgue) measure by $|E|$. 

Suppose $\Omega \subset \mathbb{R}^N $ is a bounded measurable set. Let $u: \Omega \rightarrow \mathbb{R}$ be a measurable function.
For any $t\in \mathbb{R}$, the sub-level set of $u$ is defined as 
$$ \{ u < t \}: = \{ x\in \Omega; \ u(x) <  t \}.$$
Then the \emph{distribution function} of $u$ is given by 
$$ \mu(t, u ) = |\{ u<t \}|.$$
This function is a monotonically increasing function of $t$, and 
for $t\leq \text{ess. inf} (u)$, we have $\mu(t,u) = 0$, while for $t\geq \text{ess. sup}(u)$, we have $\mu(t,u) = |\Omega|$. 

The \emph{increasing rearrangement} of $u$ is a function, denoted $u_*$, is defined on $[0, |\Omega| ]$ by 
$$u_* (|\Omega|) = \text{ess. sup} (u)$$ 
\begin{equation}
\label{P-001}
u_* (s) = \inf \{t\in\mathbb{R}; \  |\{ u<t \}| >s   \}, \ \ \ \ \  0\leq s < |\Omega|.
\end{equation}

This new function $u_*$ is essentially the inverse function of $\mu(t,u)$,
but it is always non-decreasing and right-continuous. 
In fact, the distribution function $\mu(t,u)$ is strictly increasing for a continuous function $u$, 
and then $u_*$ must also be continuous. 

Moreover, the mapping $u\rightarrow u_*$ is non-decreasing, i.e. 
if $u\leq v$, where $u$ and $v$ are real valued function on $\Omega$, then $u_* \leq v_*$.

\begin{defn}
\label{defn-P001}
Two real valued functions (with possibly different domains of definition) 
are said to be equimeasurable if they have the same distribution functions.
\end{defn}

One important figure of the increasing rearrangement is that two functions $u: \Omega \rightarrow \mathbb{R}$ 
and $u_*: [0, |\Omega|]\rightarrow \mathbb{R}$ are \emph{equimeasurable}, i.e. 
we have 
\begin{equation}
\label{P-002}
|\{ u < t \}| = |\{ u_* < t\}|,
\end{equation}
for all $t\in\mathbb{R}$.
More generally,  the following facts are well known, and readers 
can refer to Kesavan's book \cite{Kes}.
\begin{lemma}
\label{lem-P001}
Let $u:\Omega\rightarrow \mathbb{R}$ be measurable. Let $F: \mathbb{R}\rightarrow \bR$
be a non-negative Borel measurable function. Then 
\begin{equation}
\label{P-003}
\int_{\Omega} F(u(x))dx = \int_0^{|\Omega|} F(u_* (s))ds.
\end{equation}
\end{lemma}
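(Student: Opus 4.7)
The plan is to first establish the equimeasurability identity (\ref{P-002}) directly from the definition of $u_*$, and then upgrade it to the integral identity (\ref{P-003}) via a standard measure-theoretic approximation of $F$ by indicator functions of half-lines.

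First I would verify (\ref{P-002}) by unwinding definition (\ref{P-001}). The distribution function $\mu(\cdot,u)$ is left-continuous in $t$ because $\{u<t_n\}\nearrow\{u<t\}$ whenever $t_n\nearrow t$. Thus $u_*(s)<t$ holds iff there exists $\tau<t$ with $\mu(\tau,u)>s$, which by left-continuity is equivalent to $s<\mu(t,u)$. Hence the set $\{s\in[0,|\Omega|]\colon u_*(s)<t\}$ is exactly $[0,\mu(t,u))$ (with the convention that this set contains $s=|\Omega|$ in the degenerate case $t>\text{ess.sup}(u)$), whose one-dimensional Lebesgue measure equals $\mu(t,u)=|\{u<t\}|$.

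Next I would consider the push-forward Borel measures $\mu_1$ and $\mu_2$ on $\mathbb{R}$ induced by $u$ and $u_*$ respectively from Lebesgue measure on $\Omega$ and on $[0,|\Omega|]$. Both are finite Borel measures of total mass $|\Omega|$, and the equimeasurability just proved says that they agree on every half-line $(-\infty,t)$. Since the collection of such half-lines is a $\pi$-system generating the Borel $\sigma$-algebra of $\mathbb{R}$, the uniqueness part of the Dynkin $\pi$-$\lambda$ theorem forces $\mu_1=\mu_2$ as Borel measures on $\mathbb{R}$. Given this equality of push-forwards, the identity (\ref{P-003}) is immediate from the abstract change-of-variables formula:
\begin{equation*}
\int_\Omega F(u(x))\,dx \;=\; \int_{\mathbb{R}} F\,d\mu_1 \;=\; \int_{\mathbb{R}} F\,d\mu_2 \;=\; \int_0^{|\Omega|} F(u_*(s))\,ds.
\end{equation*}
Equivalently, without invoking push-forwards, one can verify the identity first for $F=\chi_{(-\infty,t)}$ (which is exactly (\ref{P-002})), extend to indicators of general Borel sets by the $\pi$-$\lambda$ argument, to non-negative simple Borel functions by linearity, and finally to arbitrary non-negative Borel $F$ by the monotone convergence theorem applied to an increasing sequence of simple functions that approximates $F$ from below.

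The one subtle step is the equimeasurability argument itself: because (\ref{P-001}) uses a strict inequality and an infimum over a set that need not contain its infimum, the equality $\{u_*<t\}=[0,\mu(t,u))$ relies crucially on the left-continuity of $\mu(\cdot,u)$ rather than on any right-continuity property of $u_*$. Everything after that is routine measure theory.
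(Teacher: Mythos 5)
Your proof is correct. Note that the paper does not actually prove Lemma \ref{lem-P001}; it records it as a well-known fact and refers the reader to Kesavan's book, so there is no in-paper argument to compare against. Your write-up is the standard one: equimeasurability of $u$ and $u_*$ via left-continuity of the distribution function (the one genuinely delicate point, which you handle correctly — the implication $s<\mu(t,u)\Rightarrow u_*(s)<t$ is exactly where left-continuity is needed, the converse following from monotonicity alone), followed by the $\pi$-system/monotone-class upgrade from indicators of half-lines to arbitrary non-negative Borel $F$. This is essentially the proof in Kesavan, and it is complete.
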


\begin{lemma}
\label{lem-P002}
Let $\Omega\subset\bR^N$ be bounded and $u:\Omega\rightarrow \mathbb{R}$ be an integrable function. 
Let $E\subset \Omega$ be a measurable subset.
Then 
\begin{equation}
\label{P-004}
\int_{E} u(x)dx \geq \int_0^{|E|} u_* (s)ds.
\end{equation}
Equality holds in equation (\ref{P-004}) if and only if,
\begin{equation}
\label{P-005}
(u|_{E})_* = u_*|_{\{[ 0, |E ] \}}, \ \ \emph{a.e.}
\end{equation}
\end{lemma}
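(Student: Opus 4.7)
The plan is to translate both sides into distribution-function language via the layer-cake (Cavalieri) formula and reduce the inequality to a pointwise comparison. First, since adding a constant $C$ to $u$ shifts both sides of (\ref{P-004}) by the same amount $C|E|$, I would normalize so that $u\geq 0$ on $\Omega$. Then the layer-cake formula gives
$$\int_E u\,dx = \int_0^{\infty} |E\cap\{u>t\}|\,dt, \qquad \int_0^{|E|} u_*(s)\,ds = \int_0^{\infty} \bigl|[0,|E|]\cap\{u_*>t\}\bigr|\,dt,$$
so it suffices to establish the pointwise bound $|E\cap\{u>t\}|\geq \bigl|[0,|E|]\cap\{u_*>t\}\bigr|$ for every $t\geq 0$.

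The pointwise bound is a short set-theoretic computation. Because $u_*$ is non-decreasing and equimeasurable with $u$ by (\ref{P-002}), the set $\{u_*>t\}$ is the right-most interval in $[0,|\Omega|]$ of measure $|\{u>t\}|$, namely $\bigl(|\Omega|-|\{u>t\}|,\,|\Omega|\bigr]$. Hence
$$\bigl|[0,|E|]\cap\{u_*>t\}\bigr| = \max\bigl(|E|-|\{u\leq t\}|,\,0\bigr),$$
while $|E\cap\{u>t\}| = |E|-|E\cap\{u\leq t\}| \geq \max\bigl(|E|-|\{u\leq t\}|,\,0\bigr)$ since $|E\cap\{u\leq t\}|\leq |\{u\leq t\}|$ and $|E\cap\{u>t\}|\geq 0$. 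Integrating this over $t$ yields (\ref{P-004}).

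For the equality case, the $t$-integrand in the layer-cake representation is non-negative, so (\ref{P-004}) becomes an equality if and only if the pointwise inequality is tight for a.e.\ $t\in\bR$, i.e.\
$$|E\cap\{u\leq t\}| = \min\bigl(|E|,\,|\{u\leq t\}|\bigr) \quad \text{for a.e.\ } t.$$
The left side is the distribution function of $u|_E$ at $t$, and since $\{u_*\leq t\}$ is the leftmost interval $[0,|\{u\leq t\}|]$ of $[0,|\Omega|]$, the right side is the distribution function of $u_*|_{[0,|E|]}$ at $t$. Thus $u|_E$ and $u_*|_{[0,|E|]}$ are equimeasurable in the sense of Definition (\ref{defn-P001}); since $u_*|_{[0,|E|]}$ is already non-decreasing and right-continuous, it equals its own increasing rearrangement, so $(u|_E)_* = u_*|_{[0,|E|]}$ a.e.\ on $[0,|E|]$, which is (\ref{P-005}).

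The core inequality is a routine bathtub-principle computation; the delicate part is the equality case, where one must carefully pass between three equivalent formulations: equality in the $t$-integral, the a.e.\ equality $|E\cap\{u\leq t\}|=\min(|E|,|\{u\leq t\}|)$ in $t$, and the a.e.\ pointwise equality of the two rearrangements on $[0,|E|]$. Each of these transitions relies on the monotonicity and right-continuity of the increasing rearrangement built into (\ref{P-001}).
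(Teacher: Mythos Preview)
The paper does not actually prove Lemma~\ref{lem-P002}; it is stated as a well-known fact with a reference to Kesavan's book \cite{Kes}. So there is no proof in the paper to compare your argument against.

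Your layer-cake/bathtub approach is standard and essentially correct, but there is one small gap worth flagging. You write ``I would normalize so that $u\geq 0$ on $\Omega$'' by adding a constant. This is only possible if $u$ is bounded below, which does \emph{not} follow from integrability on a bounded domain (e.g.\ $u(x)=-|x|^{-1/2}$ on $(-1,1)\subset\bR$). The fix is routine: truncate $u_M:=\max(u,-M)$, apply your argument to the bounded-below function $u_M$, note that $(u_M)_*=\max(u_*,-M)$, and pass to the limit $M\to\infty$ by monotone convergence on both sides. Alternatively, use the signed layer-cake formula
\[
\int_E u\,dx=\int_0^\infty |E\cap\{u>t\}|\,dt-\int_{-\infty}^0 |E\cap\{u\leq t\}|\,dt
\]
and its analogue for $u_*$, and compare the two integrands separately; the pointwise set-theoretic estimates you wrote down go through unchanged.

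Apart from this, your identification of $\{u_*>t\}$ as the right-most interval of length $|\{u>t\}|$ and the resulting computation $|[0,|E|]\cap\{u_*>t\}|=\max(|E|-|\{u\leq t\}|,0)$ are correct, and your treatment of the equality case---reducing to equimeasurability of $u|_E$ and $u_*|_{[0,|E|]}$, then using that the latter is its own rearrangement---is clean. One last remark: you conclude the distribution functions agree for a.e.\ $t$; since both are monotone, they then agree at every common continuity point, which is enough to force the rearrangements to coincide a.e.\ on $[0,|E|]$.
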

Although we are not going to use, but it is still worthy  mentioning 
that the equation (\ref{P-005}) holds if $E$ is exactly a sub-level set of $u$, i.e. we have 
$$ \int_{\{ u <t \}} u(x)dx = \int_0^{|\{u<t\}|} u_* (s)ds.$$

\subsection{The Schwarz symmetrization}
Given a measurable subset $E$ in $\bR^N$ of finite measure, 
we will denote by $\hat{E}$, the open ball centred at the origin $\underline{O}$ 
and having the same measure as $E$, i.e. $|E| = |\hat{E}|$. 
Let $a_N$ be the volume of the unit ball in $\bR^N$. That is to say 
$$ a_N = \frac{\pi^{\frac{N}{2}}}{\Gamma(\frac{N}{2}+1)},$$
where $\Gamma(s)$ is the gamma function. 

\begin{defn}
\label{defn-P002}
Let $\Omega\subset \bR^N$ be a bounded domain, and $u:\Omega\rightarrow \bR$ be 
a measurable function. Then its Schwarz symmetrization is the function $\hat{u}: \hat\Omega \rightarrow \bR$ defined by 
$$ \hat{u} (x) = u_*(a_N |x|^N),\ \ \ x\in \hat\Omega. $$
\end{defn}

Taking $|x| = r$ and $s = a_N r^N$, we have the following from the  change of variables:
$$\int_{\hat\Omega} \hat{u}(x) dx = \int_0^{|\Omega|}  u_*(s)ds. $$

Several useful properties of the Schwarz symmetrization are listed in the following Proposition.

\begin{prop}
\label{prop-P001}
Let $\Omega\subset \bR^N$ be a bounded domain, and $u:\Omega\rightarrow \bR$ be 
a measurable function.
Let $\hat u: \hat\Omega \rightarrow \bR$ be its Schwarz symmetrization. Then we have
\begin{enumerate}
\item[(i)] 
$\hat{u}$ is radially symmetric and non-decreasing.

\item[(ii)] 
$u$, $u_*$ and $\hat{u}$ are all equimeasurable. 

\item[(iii)] 
If $F: \bR \rightarrow \bR$ is a non-negative Borel measurable function, then 
$$\int_{\hat\Omega} F(\hat{u}(x)) dx  = \int_{\Omega} F(u(x)) dx. $$

\item[(iv)]
If $G : \bR \rightarrow \bR$ is a non-decreasing function, then 
$$ \widehat{G(u)} = G(\hat u), \ \ \ \emph{a.e.}$$

\item[(v)]
If $E\subset \Omega$ is a measurable subset, then 
$$ \int_E u(x) dx \geq \int_0^{|E|} u_*(s)ds = \int_{\hat E} \hat u(x)dx. $$
Equality occurs if and only if, $\widehat{( u|_{E} )} = \hat u|_{\hat E}$.

\item[(vi)]
(P\'olya-Szeg\"o) Let $1\leq p < \infty$. 
Let $u\in W^{1,p}_0(\Omega)$ be a non-positive function. Then we have $\hat u\in W^{1,p}_0(\hat\Omega)$ and 
$$ \int_{\hat\Omega} |\nabla \hat u|^p dx \leq \int_{\Omega} |\nabla u|^p dx. $$
\end{enumerate}
\end{prop}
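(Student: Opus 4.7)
The plan is to handle the six parts in order, using throughout the fact that $\hat u$ is built from the one-dimensional object $u_*$ by the measure-preserving change of variables $s = a_N|x|^N$ (so $|\hat\Omega| = |\Omega|$). Items (i) and (ii) are essentially immediate. For (i), since $u_*$ is non-decreasing on $[0,|\Omega|]$, the composition $\hat u(x) = u_*(a_N|x|^N)$ depends only on $|x|$ and is non-decreasing in $|x|$. For (ii), the equimeasurability of $u$ and $u_*$ is equation (\ref{P-002}); for $\hat u$, the radial change of variables $s = a_N|x|^N$ gives
$$|\{\hat u < t\}| = \int_{\hat\Omega}\mathbf{1}_{\{u_*(a_N|x|^N)<t\}}\,dx = \int_0^{|\Omega|}\mathbf{1}_{\{u_*<t\}}(s)\,ds = |\{u_* < t\}|,$$
which in turn equals $|\{u<t\}|$.

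The next three items are then consequences of the one-variable Lemmas \ref{lem-P001} and \ref{lem-P002}. For (iii), I would apply Lemma \ref{lem-P001} once to $u$ and once to $\hat u$ (whose own increasing rearrangement on $[0,|\Omega|]$ is again $u_*$, by (ii)); both sides collapse to $\int_0^{|\Omega|} F(u_*(s))\,ds$. For (iv), it suffices to establish $(G\circ u)_* = G\circ u_*$ a.e.\ and then compose with $s = a_N|x|^N$; this identity follows by comparing distribution functions, since for $G$ non-decreasing one has $\{G\circ u < t\} = \{u < \sigma_t\}$ with $\sigma_t = \sup\{\sigma : G(\sigma) < t\}$, so the two sides share the same distribution and the equality on $[0,|\Omega|]$ drops out of the definition (\ref{P-001}). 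For (v), the inequality $\int_E u\,dx \geq \int_0^{|E|} u_*(s)\,ds$ is precisely Lemma \ref{lem-P002}, while $\int_0^{|E|} u_*(s)\,ds = \int_{\hat E}\hat u\,dx$ is again the radial change of variables, since $\hat E$ is the ball of radius $(|E|/a_N)^{1/N}$ centered at the origin; the equality clause transfers directly from that of Lemma \ref{lem-P002}.

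The main obstacle is the P\'olya-Szeg\"o inequality (vi), for which I would use the standard proof via the coarea formula and the Euclidean isoperimetric inequality. After reducing by density to $u \in C^\infty_c(\Omega)$ with $u \leq 0$, Sard's theorem ensures $\{u = t\}$ is a smooth hypersurface for a.e.\ $t$. Writing $\mu(t) = |\{u < t\}|$, coarea gives $-\mu'(t) = \int_{\{u=t\}} |\nabla u|^{-1}\,d\sigma$ for a.e.\ $t$, and combining H\"older on the slice with the isoperimetric inequality $H^{N-1}(\{u=t\}) \geq N a_N^{1/N}\mu(t)^{(N-1)/N}$ produces
$$\int_{\{u=t\}}|\nabla u|^{p-1}\,d\sigma \;\geq\; \frac{\bigl(N a_N^{1/N}\mu(t)^{(N-1)/N}\bigr)^p}{(-\mu'(t))^{p-1}}.$$
Because $\hat u$ has the same distribution function $\mu$, its level sets are spheres (saturating isoperimetry) on which $|\nabla \hat u|$ is constant (saturating H\"older), the right-hand side is exactly the corresponding integral $\int_{\{\hat u=t\}}|\nabla \hat u|^{p-1}\,d\sigma$. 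Integrating in $t$ and invoking coarea once more yields $\int_\Omega|\nabla u|^p\,dx \geq \int_{\hat\Omega}|\nabla \hat u|^p\,dx$; the membership $\hat u \in W^{1,p}_0(\hat\Omega)$ then follows from the finiteness of the right-hand side together with the one-dimensional absolute continuity of $u_*$ inherited from $u$, the vanishing trace on $\partial\hat\Omega$ coming from $u \in W^{1,p}_0(\Omega)$ via equimeasurability.
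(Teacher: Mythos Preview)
The paper does not prove this proposition at all: it is stated as a summary of standard rearrangement facts, in the same spirit as the preceding Lemmas~\ref{lem-P001} and~\ref{lem-P002}, for which the reader is referred to Kesavan's book~\cite{Kes}. There is therefore nothing to compare your argument against on the paper's side.

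That said, your sketch is essentially the standard route and is correct in outline. Parts (i)--(iii) and (v) are handled exactly as one expects, reducing to the one-variable lemmas via the change of variables $s=a_N|x|^N$. For (iv) your set identity $\{G\circ u<t\}=\{u<\sigma_t\}$ is the right idea but needs a small caveat: when $G$ has jumps one should phrase this as equality of distribution functions rather than literal set equality, and then invoke that two non-decreasing right-continuous functions on $[0,|\Omega|]$ with the same distribution agree a.e. For (vi) your coarea/isoperimetric argument is the classical one; the only point that deserves a word more care is the claim that $\hat u\in W^{1,p}_0(\hat\Omega)$, where one typically first establishes that $u_*$ is locally absolutely continuous on $(0,|\Omega|)$ (this is where the smoothness/Sobolev hypothesis on $u$ is genuinely used) before passing to $\hat u$. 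None of this is a gap so much as the usual level of detail one would add when writing the argument out in full.
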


\section{$S^1$-invariant plurisubharmonic functions}

Let $\Omega$ be an open, connected and bounded subset  of $\bR^N$ with $N=2n, n\in \mathbb{Z}^+$.  
Then the set $\Omega$ can also be viewed as a domain in $\bC^n$. 
It is called a \emph{balanced domain} if for every $\lambda \in \bD$ (the unit disk) and $z\in \Omega$, we have $\lambda z\in \Omega$. 
Consider the following $S^1$-action on $\Omega$: 
$$z \rightarrow e^{i\theta}z = (e^{i\theta}z_1, \cdots, e^{i\theta } z_n), $$
for $\theta\in \bR$. 
A function $f$ defined on a balanced domain is called \emph{ $S^1$-invariant } if $f(e^{i\theta}z) = f(z)$ for all $\theta\in\bR$ at every $z\in\Omega$.

Let $u$ be a plurisubharmonic function on a balanced domain $\Omega$. 
It is natural to ask whether its Schwarz symmetrization $\hat u$ on $\hat\Omega$ is still plurisubharmonic. 
Unfortunately, this is not true in general. As indicated in Berman-Berndtsson \cite{BB}, the Green function 
on the complex disk 
$$ u(z) = \log \left|  \frac{z-a}{1-\bar a z}\right| $$
has plurisubharmonic Schwarz symmetrization $\hat u$ only if $a=0$.

However, Berman-Berndtsson \cite{BB} gave an affirmative answer to this question when $u$ is $S^1$-invariant.
\begin{theorem}[Berman-Berndtsson]
\label{thm-S1-001}
Let $\Omega$ be a balanced domain in $\bC^n$, and $u$ be an $S^1$-invariant plurisubharmonic function on $\Omega$.
Then its Schwarz symmetrization $\hat u$ is plurisubharmonic on $\hat\Omega$. 
\end{theorem}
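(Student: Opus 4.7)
The plan is to reduce plurisubharmonicity of $\hat u$ to a log-concavity statement about the distribution function $V(t):=|\{u<t\}|$, and then to prove the latter by a complex Brunn--Minkowski argument that uses $S^1$-invariance crucially.

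For the reduction, Proposition \ref{prop-P001}(i) shows that $\hat u$ is radial and non-decreasing, so one may write $\hat u(z)=\psi(\log|z|)$. A radial non-decreasing function is plurisubharmonic if and only if its profile is convex in $\log|z|$, i.e.\ $\psi$ is convex on $\bR$, since then $\hat u(f(\lambda))=\psi(\log|f(\lambda)|)$ is the composition of the convex non-decreasing $\psi$ with the plurisubharmonic $\log|f|$. Starting from $\hat u(z)=u_*(a_{2n}|z|^{2n})$ and using that $u_*$ is the generalised inverse of $V$, the equation $V(\psi(s))=a_{2n}e^{2ns}$ shows that $\psi^{-1}(t)=(2n)^{-1}(\log V(t)-\log a_{2n})$, so convexity of $\psi$ is equivalent to concavity of $\log V(t)$ in $t$. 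Thus the whole theorem reduces to the single claim that $\log V(t)$ is concave in $t$.

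The $S^1$-invariance enters as follows. For each fixed $z\in\Omega$, the slice $\lambda\mapsto u(\lambda z)$ is plurisubharmonic and $S^1$-invariant in $\lambda\in\bD$, hence convex and non-decreasing in $\log|\lambda|$; consequently every sublevel set $\Omega_t=\{u<t\}$ is balanced, and as a sublevel set of a plurisubharmonic function it is pseudoconvex. Its Minkowski functional $\rho_t$ is then absolutely $\bC$-homogeneous of degree one, $S^1$-invariant, and $\log\rho_t$ is plurisubharmonic on $\bC^n$; polar coordinates give
\[ V(t)=\frac{1}{2n}\int_{S^{2n-1}}\rho_t(w)^{-2n}\,d\sigma(w), \]
so log-concavity of $V$ becomes a statement about the joint behaviour of this integrand in the direction $w\in S^{2n-1}$ and the level $t$.

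The core step is a complex Brunn--Minkowski inequality. I would consider the family $\Phi(z,\tau):=u(e^\tau z)$ on $\Omega\times\{\Re\tau\le 0\}$ (using that $u$ extends to $B_{1+\delta}$ to guarantee a small open range of $\tau$), which is plurisubharmonic in $(z,\tau)$ by holomorphic pullback and, by $S^1$-invariance of $u$, depends on $\tau$ only through $\Re\tau$. Applying Berndtsson-type subharmonicity of fibre integrals to the smoothed exhaustions built from the positive-part exponents $k(u(z)-\Re\tau)_+$, which are plurisubharmonic in $(z,\tau)$ as the maximum of two plurisubharmonic functions, and then letting $k\to\infty$, should yield the desired concavity of $\log V(t)$. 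The main obstacle is this last step: without $S^1$-invariance log-concavity can fail (witness the M\"obius Green function example quoted after Theorem \ref{thm-S1-001}), so the argument must exploit the invariance of $\Phi$ in the imaginary direction of $\tau$ in an essential way, and must carefully handle the exhaustion limit and the boundary contributions created by the $(\,\cdot\,)_+$ operation.
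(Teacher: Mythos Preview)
The paper does not give its own proof of Theorem~\ref{thm-S1-001}; the result is quoted from Berman--Berndtsson \cite{BB} (their Theorem~2.3) and used as a black box. Consequently there is nothing in the present paper to compare your argument against.

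For what it is worth, your reduction is the same as in \cite{BB}: writing $\hat u(z)=\psi(\log|z|)$ and observing that plurisubharmonicity of a radial non-decreasing function is equivalent to convexity of $\psi$, which in turn is equivalent to concavity of $t\mapsto\log|\{u<t\}|$. Your identification of the key geometric input---that $S^1$-invariance forces each sublevel set $\{u<t\}$ to be balanced and pseudoconvex---is also the right one. The core step you leave imprecise is exactly where the work lies: Berman and Berndtsson deduce the log-concavity of the volume from a complex Pr\'ekopa-type inequality (essentially Berndtsson's subharmonicity of fibre integrals, or equivalently a Kiselman minimum-principle argument) applied to a psh function of $(z,\tau)$ that depends on $\tau$ only through $\Re\tau$. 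Your function $\Phi(z,\tau)=u(e^{\tau}z)$ and the cutoff $k\,(u-\Re\tau)_+$ are natural candidates, but the passage to the limit $k\to\infty$ and the identification of the limiting fibre integral with the sublevel volume need to be spelled out; as you note, this step must genuinely use the invariance in $\Im\tau$, since the M\"obius Green-function example shows the conclusion fails otherwise. In short, your outline is on the right track and mirrors \cite{BB}, but the last paragraph is a sketch rather than a proof.
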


Since we are interested with the singularity of $u$, we assume that its polar set $\{ u =-\infty \}$ is non-empty in $\Omega$.
By the maximum principle. we can also assume that $\sup_{\Omega} u = \sup_{\d\Omega} u = 0$. 
Then its symmetrization function  $\hat u: \hat\Omega \rightarrow \bR$ is radially symmetric, non-decreasing w.r.t. the radius $r=|z|$,
and reaches its maximum on the boundary, i.e. $\sup_{\hat\Omega} \hat u = \hat u|_{\d\hat\Omega} =0  $.

Moreover, $\hat u$ is continuous outside the origin and decreases to $-\infty$ as $r$ is converging to zero,
since the function $ f( t ): = \hat u ( e^t ) $ is convex and bounded from above 
for  $t\in (-\infty, 0)$ by Berman-Berndtsson's result. 

The convex function $f(t)$ is locally Lipschitz, and then its first derivative $f'(t)$ exists almost everywhere  
and is non-decreasing for $t\in (-\infty, 0)$.
In fact, the following limit 
$$ \lim_{t\rightarrow -\infty }f'(t)$$ 
always exits and is equal to the Lelong number $\nu_{\hat u} (0)$ of $\hat u$ at the origin.

Then we can compare this Lelong number $\nu_{\hat u}(0)$ with the original one $\nu_u(0)$. 
Notice that the Lelong number of a plurisubharmonic function is purely a local concept. 
Hence we will assume that the domain $\Omega$ is the unit ball $B \subset\bC^n$ from now on. 

Let  $u$ be an $S^1$-invariant plurisubharmonic function on $B$,
and we say that it can be \emph{extended invariantly} to a larger ball $B_{1+\delta}$,
if there exists an $S^1$-invariant plurisubharmonic function $v$ on $B_{1+\delta}$ such that the restriction $v|_{B}$ is equal to $u$.
Based on these assumptions, we state our main theorem as follows. 

\begin{theorem}
\label{thm-main}
Let $u$ be an $S^1$-invariant plurisubharmonic function on the unit ball $B$, 
which can be extended invariantly to a slightly larger ball $B_{1+\delta}$. Let $\hat u$ be its Schwarz symmetrization. 
Then we have 
\begin{equation}
\label{S1-001}
\nu_u(0) \leq \nu_{\hat u} (0) \leq n \nu_{u}(0). 
\end{equation}
In particular, if $\nu_u (0) =0$, then $\nu_{\hat u}(0) = 0$.
\end{theorem}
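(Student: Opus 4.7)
The plan is to prove the two inequalities of (\ref{S1-001}) by essentially independent arguments, exploiting in both cases the radial structure of $\hat u$ together with the equimeasurability $|\{u<t\}|=|\{\hat u<t\}|$ from Proposition \ref{prop-P001}. For the lower bound $\nu_u(0)\leq \nu_{\hat u}(0)$, I would first establish the pointwise comparison $\hat u(r)\leq M(r)$ on $(0,1)$, where $M(r):=\sup_{|z|=r}u(z)$. Since $u$ is $S^1$-invariant, its restriction to any complex line through the origin is radial and subharmonic, so $u$ is non-decreasing along each ray $\{sz\}_{s\in[0,1]}$; in particular $M(r)=\sup_{|z|\leq r}u(z)$ and $M$ is non-decreasing. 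For any $\epsilon>0$, the set $\{u<M(r)+\epsilon\}$ is open by upper semicontinuity of $u$ and contains the compact ball $\overline{B_r}$; being open it must strictly contain this compact set, so $|\{u<M(r)+\epsilon\}|>a_{2n}r^{2n}$, and the definition (\ref{P-001}) of $u_*$ yields $\hat u(r)=u_*(a_{2n}r^{2n})\leq M(r)+\epsilon$. Letting $\epsilon\to 0$ proves the claim. Both $\hat u(e^s)$ and $M(e^s)$ are convex in $s$, so the limits $\nu_{\hat u}(0)=\lim_{r\to 0}\hat u(r)/\log r$ and $\nu_u(0)=\lim_{r\to 0}M(r)/\log r$ exist; since $\log r<0$, the comparison reverses upon division to give $\nu_{\hat u}(0)\geq\nu_u(0)$.

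For the upper bound $\nu_{\hat u}(0)\leq n\nu_u(0)$, I would argue via the integrability index. Equimeasurability and the radial structure of $\hat u$ first yield
$$\nu_{\hat u}(0)=\lim_{t\to -\infty}\frac{2nt}{\log|\{u<t\}|},$$
because $|\{u<t\}|=a_{2n}R(t)^{2n}$ where $R(t)$ is the radius with $\hat u(R(t))=t=f(\log R(t))$, and the convex function $f(s)=\hat u(e^s)$ satisfies $\lim_{s\to -\infty}f(s)/s=\nu_{\hat u}(0)$. Next, for any $r>\nu_u(0)$, Skoda's inequality (\ref{intro-000}) gives $r>\iota_u(0)$, so $e^{-2u/r}\in L^1_{\mathrm{loc}}(\underline O)$; combined with the maximum-at-the-origin statement from Proposition \ref{prop-S1-002} (which bounds $\iota_u(z)$ by $\iota_u(0)$ for every $z\in\overline B$), a finite covering of $\overline B$ upgrades local to global integrability and gives $\int_B e^{-2u/r}\,dV<\infty$. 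Chebyshev's inequality then produces $|\{u<t\}|\leq C_r\,e^{2t/r}$; inserting this into the identity above yields $\nu_{\hat u}(0)\leq nr$, and letting $r\downarrow\nu_u(0)$ completes the argument. The special case $\nu_u(0)=0$ is automatic: Skoda forces $\iota_u(0)=0$, the estimate runs for every $r>0$, and $\nu_{\hat u}(0)\leq nr$ for all $r>0$ gives $\nu_{\hat u}(0)=0$.

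The main obstacle I anticipate is precisely the step that upgrades the local condition $e^{-2u/r}\in L^1_{\mathrm{loc}}(\underline O)$ to the global condition $e^{-2u/r}\in L^1(B)$ --- this is essential for Chebyshev to control the entire sub-level set and relies on Proposition \ref{prop-S1-002}; without such control, polar points of $u$ elsewhere in $\overline B$ would destroy the estimate. A minor technical subtlety in the lower bound is the strict measure inequality $|\{u<M(r)+\epsilon\}|>a_{2n}r^{2n}$, which follows because any open set covering a compact set with non-empty boundary must strictly contain it; the invariant extendability of $u$ to $B_{1+\delta}$ ensures this topological argument extends cleanly up to $\partial B$.
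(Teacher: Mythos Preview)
Your argument is correct, and the upper bound is essentially the paper's own proof: both pass through Proposition~\ref{prop-S1-002} (Lelong number maximal at the origin), then use Skoda and a finite cover of $\overline B$ to obtain $e^{-2u/r}\in L^1(B)$ for every $r>\nu_u(0)$, and finally convert this into $\nu_{\hat u}(0)\leq nr$. The paper phrases the last step as $\cC_u(B)=\cC_{\hat u}(B)$ together with $\nu_{\hat u}(0)\leq n\,\cC_{\hat u}^{-1}(B)$ (Proposition~\ref{prop-S1-001}); your Chebyshev formulation via $|\{u<t\}|\leq C_r e^{2t/r}$ is an equivalent repackaging. One small correction: Proposition~\ref{prop-S1-002} asserts $\nu_u(z)\leq\nu_u(0)$, not $\iota_u(z)\leq\iota_u(0)$ (the latter is the content of Theorem~\ref{thm-main2}, proved afterwards). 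Your covering step still goes through, since for $r>\nu_u(0)$ one has $\iota_u(z)\leq\nu_u(z)\leq\nu_u(0)<r$ by Skoda and Proposition~\ref{prop-S1-002}; just adjust the citation.

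For the lower bound you take a genuinely different route. The paper (Lemma~\ref{lem-S1-001}) compares the \emph{averages} $V_u(0,r)\geq V_{\hat u}(0,r)$ via Proposition~\ref{prop-P001}(v) and then passes to the limit. You instead prove the pointwise comparison $\hat u(r)\leq M(r)=\sup_{\overline{B_r}}u$ directly from the definition of $u_*$, using that the open sublevel set $\{u<M(r)+\epsilon\}$ strictly contains $\overline{B_r}$. This is more elementary (it avoids the Hardy--Littlewood-type inequality behind Proposition~\ref{prop-P001}(v)) and actually yields a stronger pointwise statement than the paper's integral inequality. Note, incidentally, that for the lower bound the extendability to $B_{1+\delta}$ is not needed: for any $r<1$ the closed ball $\overline{B_r}$ is already compact in $B$, so the strict-measure step works without it. The extension hypothesis is genuinely used only in the covering argument for the upper bound.
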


The first observation is that the Schwarz symmetrization always increases the Lelong number at the origin. 

\begin{lemma}
\label{lem-S1-001}
Let $u$ be an $S^1$-invariant plurisubharmonic function on the unit ball $B$, 
and $\hat u$ be its Schwarz symmetrization. 
Then we have 
$$\nu_u(0) \leq \nu_{\hat u} (0).$$ 
\end{lemma}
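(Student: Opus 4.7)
The plan is to exploit the characterization of the Lelong number given in the introduction, namely
$$\nu_u(0) = \sup\{\gamma \geq 0 : u(z) \leq \gamma \log|z| + O(1) \text{ near } \ul{O}\},$$
and to translate the upper bound on $u$ into a volume estimate for sub-level sets, which via equimeasurability becomes an upper bound on $\hat u$.

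First I would dispense with the trivial case $\nu_u(0)=0$, where the conclusion is immediate from $\nu_{\hat u}(0)\geq 0$. So assume $\nu_u(0)>0$ and fix an arbitrary $\gamma \in (0,\nu_u(0))$. By the above characterization there is a constant $C$ such that $u(z)\leq \gamma\log|z|+C$ on a neighborhood of the origin. For $t\ll 0$ the ball $\{|z|\leq e^{(t-C)/\gamma}\}$ lies inside this neighborhood and is contained in $\{u\leq t\}$, producing the volume estimate
$$|\{u\leq t\}|\geq a_{2n}\, e^{2n(t-C)/\gamma}.$$
Here I use the basic fact, noted in the paper, that the $S^1$-invariance and subharmonicity along the disk $D_z$ force $u(\ul O)=-\infty$ as soon as the polar set is non-empty, so the sub-level sets actually accumulate at $\ul O$.

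Next I would feed this estimate into the definition of $u_*$ from equation (\ref{P-001}) (together with equimeasurability, Proposition \ref{prop-P001}(ii)), obtaining
$$u_*(s)\leq C+\tfrac{\gamma}{2n}\log(s/a_{2n})\qquad\text{for }s\text{ small.}$$
Substituting $s=a_{2n}r^{2n}$ in the definition $\hat u(x)=u_*(a_{2n}|x|^{2n})$ yields $\hat u(r)\leq \gamma\log r+C$ for every small $r$. Dividing by the (negative) quantity $\log r$ and letting $r\to 0$ then gives $\nu_{\hat u}(0)\geq \gamma$, since for the radial, non-decreasing psh function $\hat u$ the Lelong number at the origin equals $\lim_{r\to 0}\hat u(r)/\log r$ (this is the observation already recorded in the text via the convexity of $f(t)=\hat u(e^t)$). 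Finally, letting $\gamma\nearrow\nu_u(0)$ concludes the proof.

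No real obstacle is expected: the argument is essentially a bookkeeping exercise transporting an asymptotic inequality through the rearrangement. The only points that require mild care are the strict/non-strict inequalities in the infimum defining $u_*$ (which is why it is cleaner to work with $\{u\leq t\}$ rather than $\{u<t\}$ and use right-continuity of $u_*$), and the justification that $\nu_{\hat u}(0)$ is realized as the radial limit $\lim_{r\to 0}\hat u(r)/\log r$—but this is supplied by the convex-function remarks preceding the lemma.
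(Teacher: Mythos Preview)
Your argument is correct. It differs from the paper's proof, which instead uses the \emph{average} characterization $\nu_u(0)=\lim_{r\to 0}V_u(0,r)/\log r$ with $V_u(0,r)=\frac{1}{a_{2n}r^{2n}}\int_{\bar B_r}u\,d\lambda$, together with Proposition~\ref{prop-P001}(v) to obtain $\int_{\bar B_r}u\,d\lambda\geq\int_{\bar B_r}\hat u\,d\lambda$ (since $\widehat{\bar B_r}=\bar B_r$); dividing by the negative quantity $\log r$ and passing to the limit gives the inequality in one stroke. Your route trades this ready-made rearrangement inequality for an explicit sublevel-set volume estimate fed directly into the definition of $u_*$. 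The paper's approach is shorter and more conceptual; yours is more hands-on, avoids invoking property~(v), and has the incidental benefit of foreshadowing the volume-asymptotics formula~(\ref{S1-010}) established later. (A minor remark: your aside about $u(\underline{O})=-\infty$ is not actually needed---once $u(z)\leq\gamma\log|z|+C$ the ball inclusion into $\{u<t\}$ is automatic.)
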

\begin{proof}
Take the following average of $u$ on a small ball $B_r$ centred at the origin: 
$$ V_u(0,r) = \frac{1}{a_{2n} r^{2n}} \int_{\bar B_r} u d\lambda. $$
Then the Lelong number $\nu_u (0)$ is equal to the limit 
$$ \lim_{r\rightarrow 0} \frac{V_u(0,r)}{\log r}. $$
However, a basic property of the symmetrization, Proposition \ref{prop-P001} - (v), says that we have 
$$\int_{\bar B_r} u d\lambda \geq \int_{\bar B_r} \hat u d\lambda,$$
since $\widehat {(\bar B_r)} = \bar B_r$. This implies $ V_{u} (0,r) \geq V_{\hat u} (0,r)$ for each $r$ small,
and we conclude the proof by taking $r\rightarrow 0$ as 
\begin{equation}
\label{S1-002}
 \nu_u (0) = \lim_{r\rightarrow 0} \frac{V_u(0,r)}{\log r} \leq   \lim_{r\rightarrow 0} \frac{V_{\hat u }(0,r)}{\log r} = \nu_{\hat u}(0). 
 \end{equation}
\end{proof}
Before going to the proof of the reversed inequality, we need to introduce the following tool, which is studied by Demailly and Koll\'ar  (\cite{DK}). 

\subsection{The complex singularity exponent}

Let $u$ be a plurisubharmonic function on a domain $\Omega$ in $\bC^n$.
For any point $x\in\Omega$, we introduce the \emph{complex singularity exponent}
of $u$ at $x$ as 
$$ \mathcal{C}_u (x) : = \sup \{ c\geq 0;\ \  e^{-2c u} \  \emph{is $L^1$ on a neighbourhood of $x$ }  \}. $$

This number $\cC_u(x)$ will take its value in $(0, +\infty]$, if we assume that 
$u$ is not identically equal to $-\infty$ in a neighbourhood $x$.
By equation (\ref{intro-000}), we further have the following estimate  
\begin{equation}
\label{S1-003}
n^{-1} \nu_u (x) \leq \cC^{-1}_u (x) \leq \nu_u(x ),
\end{equation}
where $\nu_u(x)$ is the Lelong number of $u$ at $x$.
More generally, we can define the complex singularity exponent of $u$ on any relatively compact sub-domain $\Omega'\subset\subset \Omega$ as
$$  \cC_{u}(\Omega') : = \sup \{ c\geq 0;\ \  e^{-2c u} \  \emph{is $L^1$ on $\Omega'$ }  \}. $$
It is clear that for any $x\in \Omega'$ we have
$$\cC_{u }(\Omega') \leq \cC_u(x).$$
Then we are going to prove a simpler version of Theorem (\ref{thm-main}) first.

\begin{prop}
\label{prop-S1-001}
Let $u$ be an $S^1$-invariant plurisubharmonic function on the unit ball $B$, 
which can be extended invariantly to a slightly larger ball $B_{1+\delta}$.
Assume that the Lelong number of $u$ on the closed unit ball reaches its maximum at the origin, i.e. 
$$ \sup_{x\in \bar B} \nu_u(x) =  \nu_u (0).$$
Then we have $ \nu_{\hat u}(0) \leq n \nu_u (0)$. 
\end{prop}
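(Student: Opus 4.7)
The plan is to route everything through the complex singularity exponent $\cC$, which is the natural bridge here: equimeasurability makes $e^{-2c u}$ and $e^{-2c\hat u}$ globally $L^1$-equivalent on $B$, while Skoda's estimate (\ref{S1-003}) links $\cC$ pointwise to the Lelong number. I would first record the identity
\[ \cC_{\hat u}(0) = \frac{n}{\nu_{\hat u}(0)}, \]
the sharp case of (\ref{S1-003}) for the radial, non-decreasing PSH function $\hat u$ with a single pole at the origin. The upper bound $\cC_{\hat u}(0)\leq n/\nu_{\hat u}(0)$ is Skoda. For the lower bound, write $f(t)=\hat u(e^t)$, which is convex with $f'(t)\searrow \nu_{\hat u}(0)$ as $t\to-\infty$, so in particular $f(t)/t\to\nu_{\hat u}(0)$; a change to polar coordinates then reduces $\int_{|z|<r}e^{-2c\hat u}\,dz$ to a constant multiple of $\int_{-\infty}^{\log r}e^{2nt-2cf(t)}\,dt$, and the control $f(t)>(\nu_{\hat u}(0)+\epsilon)t$ for $t\ll 0$ shows that this integral converges whenever $c<n/\nu_{\hat u}(0)$. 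Since $\hat u$ has no other singularities on $B$, the same computation gives $\cC_{\hat u}(B)=\cC_{\hat u}(0)$.

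Next I would apply Proposition (\ref{prop-P001})-(iii) to the non-negative Borel function $F(s)=e^{-2cs}$ to obtain $\int_B e^{-2cu}\,dx=\int_B e^{-2c\hat u}\,dx$ for every $c\geq 0$, whence $\cC_u(B)=\cC_{\hat u}(B)=n/\nu_{\hat u}(0)$. To turn the hypothesis into a lower bound on $\cC_u(B)$, fix $c<1/\nu_u(0)$ and use the assumption $\nu_u(x)\leq\nu_u(0)$ together with the pointwise Skoda bound $\cC_u^{-1}(x)\leq\nu_u(x)$ to see that $c<\cC_u(x)$ at every $x\in\bar B$ (the case $\nu_u(x)=0$, where $\cC_u(x)=+\infty$, being trivial). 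Each such $x$ therefore admits an open neighbourhood $V_x\subset B_{1+\delta}$ on which $e^{-2cu}$ is integrable; the invariant extension to $B_{1+\delta}$ enters here precisely so that boundary points $x\in\partial B$ also possess such neighbourhoods in the domain of definition of $u$. Compactness of $\bar B$ then yields a finite sub-cover, giving $e^{-2cu}\in L^1(B)$ and hence $\cC_u(B)\geq 1/\nu_u(0)$.

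Chaining these inequalities gives
\[ \frac{1}{\nu_u(0)}\;\leq\;\cC_u(B)\;=\;\frac{n}{\nu_{\hat u}(0)}, \]
which rearranges into the desired bound $\nu_{\hat u}(0)\leq n\nu_u(0)$. The main obstacle I anticipate is the pointwise-to-global passage in the second step: equimeasurability is inherently a global statement on $B$, so the only way to match it with the pointwise Lelong hypothesis is via the compactness argument, and this is exactly where the invariant extension assumption becomes indispensable. The radial identity $\cC_{\hat u}(0)=n/\nu_{\hat u}(0)$ in the first step is classical but must be argued with care, since $\hat u$ need not differ from $\nu_{\hat u}(0)\log|z|$ by a bounded function; the correct substitute is the asymptotic $f(t)/t\to\nu_{\hat u}(0)$, which is all that the polar-coordinate calculation really needs.
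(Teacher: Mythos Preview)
Your proof is correct and follows essentially the same route as the paper: both arguments pass through the complex singularity exponent, using equimeasurability (Proposition~\ref{prop-P001}-(iii)) for $\cC_u(B)=\cC_{\hat u}(B)$, a compactness argument on $\bar B$ (relying on the invariant extension to $B_{1+\delta}$) for $\cC_u(B)\geq 1/\nu_u(0)$, and Skoda's estimate for $\hat u$. The only difference is that you establish the full radial identity $\cC_{\hat u}(0)=n/\nu_{\hat u}(0)$, whereas the paper needs only the Skoda inequality $\nu_{\hat u}(0)\leq n\,\cC_{\hat u}^{-1}(B)$ here and defers the equality to the Appendix (Proposition~\ref{prop-app-002}); your polar-coordinate computation is thus slightly more than required for this proposition but not incorrect.
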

\begin{proof}
As explained before, we can assume that the function $u$ is always  negative and has non-trivial polar set, 
and then the symmetrization $\hat u$ on $B$ is also negative, radially symmetric, non-decreasing 
and has only a single pole at the origin $\underline{O}$. Then it is clear that we have 
$$\cC_{\hat u}(\underline{O} ) = \cC_{\hat u} (B), $$
and then the inequality $\nu_{\hat u} (0) \leq n \cC_{\hat u}^{-1}(B)$ follows from equation (\ref{intro-000}). 
On the other hand, we claim that the following estimate holds: 
\begin{equation}
\label{S1-0040}
\nu_u^{-1} (0) \leq \cC_{u} (B). 
\end{equation}
In fact, we have 
$$\nu_u^{-1}(0) \leq \nu_u^{-1} (x) \leq \cC_u (x)$$
for all $x\in\bar B$ by our assumptions and equation (\ref{intro-000}). 
Taking any real number $ 0< c <\nu_u^{-1} (0)  $,
 there exist a small radius $0< r < \delta/10 $ for each $x\in \bar B$ such that 
the following integral is finite 
$$ \int_{B_r (x)} e^{-2c u} d\lambda < +\infty. $$

Moreover, there are finitely many such balls $\{ B_{r_j} (x_j) \}_{j=1, \cdots, k}$ covering the closed unit ball $B$,
and their union is contained in $B_{1+\delta}$. 
Eventually, we can control the following integral as 
\begin{equation}
\label{S1-004}
\int_B e^{-2cu}d\lambda \leq \sum_{j=1}^k \int_{B_{r_j} (x_j)}   e^{-2cu} d\lambda < +\infty.
\end{equation} 
This implies $c \leq \cC_u (B)$, for all $c\in (0, \nu^{-1}_u(0 ) )$,
and our claim (equation (\ref{S1-0040}))  follows by taking the supreme. 

Next notice that the complex singularity exponent is unchanged under the symmetrization, i.e. $\cC_u (B) = \cC_{\hat u} (B)$.
This is because we have 
\begin{equation}
\label{S1-005}
\int_{B} e^{-2cu} d\lambda = \int_B e^{-2c\hat u} d\lambda,
\end{equation}
for all $c\in \bR^+$ (two sides can possibly both equal to $\infty$), by Proposition (\ref{prop-P001})-(iii). 
Finally, our estimate follows since we have 
\begin{equation}
\label{S1-006}
\nu_{\hat u} (0) \leq n \cC_{\hat u}^{-1}(B) = n \cC_{u}^{-1}(B) \leq  n \nu_{u} (0). 
\end{equation}
\end{proof}

\subsection{The Lelong number}
In the following, we will argue that the Lelong number 
of an $S^1$-invariant function $u$ indeed reaches its maximum at the center of the ball,
and then the proof of Theorem (\ref{thm-main}) boils down to the case in Proposition (\ref{prop-S1-001}).

A useful observation is made by Berman and Berndtsson \cite{BB} 
to argue that   each  sub-level set $\Omega_t = \{ u < t \}$ is a path-connected domain.  
In fact, if we assume that a point $z\in\Omega$ is contained in the sub-level set $\Omega_t$.
Then the holomorphic disk $D_z = \{ \lambda z\}, \lambda\in \mathbb{D}$ is also contained in $\Omega_t$
by the following lemma. 

\begin{lemma}[Berman-Berndtsson]
\label{lem-S1-002}
There exist a non-decreasing function $g: [0, |z|] \rightarrow \bR \cup \{-\infty \}$ such that
 for all $\lambda \in \mathbb{D}$  we have 
$$u  (\lambda z) = g (|\lambda|). $$
In particular, if $z\in \{ u^{-1}(-\infty) \} $, then  $D_z \subset \{u^{-1}(-\infty) \}$. 
\end{lemma}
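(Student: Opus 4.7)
The plan is to restrict $u$ to the complex line through the origin spanned by $z$, use plurisubharmonicity to make the restriction subharmonic on the unit disk, combine this with $S^1$-invariance to force rotational symmetry, and then derive monotonicity from the classical monotonicity of circular means of subharmonic functions.

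First I would form $h(\lambda) := u(\lambda z)$ for $\lambda \in \mathbb{D}$; this is well-defined since $\Omega$ is balanced, and it is subharmonic on $\mathbb{D}$ as the pullback of the plurisubharmonic function $u$ by the holomorphic map $\lambda \mapsto \lambda z$. (If $z$ lies in the polar set of $u$ the function $h$ may be identically $-\infty$ on $\mathbb{D}$, in which case $g \equiv -\infty$ works and the conclusion is trivial; otherwise $h \not\equiv -\infty$.)

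Next I would apply the $S^1$-invariance of $u$ to the point $\lambda z \in \Omega$:
$$ h(e^{i\theta}\lambda) = u(e^{i\theta}\lambda z) = u\bigl(e^{i\theta}(\lambda z)\bigr) = u(\lambda z) = h(\lambda). $$
Thus $h$ depends only on $|\lambda|$, so there exists a function $g$ with $h(\lambda) = g(|\lambda|)$; after the affine reparametrization $r \mapsto r|z|$ this gives the map on $[0,|z|]$ in the statement. For the monotonicity of $g$, I would invoke the standard fact that for any subharmonic function $v$ on $\mathbb{D}$ the circular mean $M_v(r) = \frac{1}{2\pi}\int_0^{2\pi} v(re^{i\theta})\,d\theta$ is non-decreasing in $r$. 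Since $h$ is already rotation-invariant, $M_h(r) = g(r)$, and hence $g$ itself is non-decreasing.

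The polar-set corollary is then immediate: if $u(z) = -\infty$ then $g(|z|) = -\infty$, and monotonicity forces $g \equiv -\infty$ on the entire segment $[0, |z|]$, i.e.\ $D_z \subset u^{-1}(-\infty)$. There is no serious obstacle in this argument — both key ingredients, namely subharmonicity of a plurisubharmonic function restricted to a complex line and the monotonicity of circular means, are classical. The only minor care required is to handle the degenerate case $h \equiv -\infty$ separately, but this is precisely the situation where the polar-set conclusion is automatic.
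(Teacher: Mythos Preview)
Your proof is correct and matches the paper's argument: the paper simply notes that $u|_{D_z}$ is an $S^1$-invariant subharmonic function on the disk and says ``everything follows from the maximum principle,'' which is exactly your radial-subharmonic-plus-monotonicity-of-circular-means argument spelled out in detail. Your handling of the degenerate case $h\equiv -\infty$ and the polar-set corollary is a welcome clarification of what the paper leaves implicit.
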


This is because the restriction $u|_{D_z}$ is an $S^1$-invariant subharmonic function on the disk $D_z$, 
and then everything follows from the maximum principle. 

For a point $z\in \Omega$, we denote 
$\mathfrak{l}_z =\{s\cdot z \}, s\in[0,1]$ by the line segment connecting the origin  $\underline{O}$ and $z$.
The key observation is that 
the function $u$ is  always \textbf{non-decreasing} along the line segment $\mathfrak{l}_z$
by Lemma (\ref{lem-S1-002}). 
Then we will see that the Lelong  number  also inherits this property.

\begin{lemma}
\label{lem-S1-003}
The Lelong number $\nu_u(x)$ is non-increasing along the line segment $\mathfrak{l}_z$ possible except at the origin.  
\end{lemma}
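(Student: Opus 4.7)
The plan is to exploit the radial monotonicity of $u$ along holomorphic disks through the origin (Lemma \ref{lem-S1-002}) together with the invariance of the Lelong number under the scaling biholomorphism $\phi_s : w \mapsto sw$ for $s \in (0,1]$. I aim to prove the stronger pointwise statement
\begin{equation*}
\nu_u(sx) \geq \nu_u(x) \qquad \text{for every } x \in B \text{ and every } s \in (0,1],
\end{equation*}
from which the non-increasing property along $\mathfrak{l}_z$ follows by setting $x = s_2 z$ and $s = s_1/s_2$ for $0 < s_1 < s_2 \leq 1$, yielding $\nu_u(s_1 z) \geq \nu_u(s_2 z)$.

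The key steps are as follows. First, because $u$ extends invariantly to $B_{1+\delta}$, the map $\phi_s$ sends $B_{1+\delta}$ into itself and $u \circ \phi_s$ is plurisubharmonic there; applying Lemma \ref{lem-S1-002} at each $w \in B$ gives $u(sw) \leq u(w)$ for $s \in [0,1]$, hence the pointwise comparison $u \circ \phi_s \leq u$ on $B$. Second, the substitution $z = sw$ together with the identity $\log |z - sx| = \log s + \log |w - x|$ gives
\begin{equation*}
\nu_u(sx) = \liminf_{w \to x} \frac{u(sw)}{\log s + \log |w - x|} = \liminf_{w \to x} \frac{u(sw)}{\log |w - x|} = \nu_{u \circ \phi_s}(x),
\end{equation*}
since $\log |w - x| \to -\infty$ makes the additive constant $\log s$ negligible and $u(sw)$ is locally bounded above as a plurisubharmonic function (this is the standard biholomorphic invariance of the Lelong number, specialized to $\phi_s$). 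Third, from $u \circ \phi_s \leq u$ together with $\log |w - x| < 0$ near $x$, dividing and taking $\liminf$ reverses the inequality to yield $\nu_{u \circ \phi_s}(x) \geq \nu_u(x)$. Combining the two identities produces the desired $\nu_u(sx) \geq \nu_u(x)$.

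The argument is essentially two lines once one has the right change-of-variables picture, and I do not anticipate any serious obstacle beyond routine bookkeeping (checking that $\phi_s$ lands in the domain of $u$ and confirming that the additive constant $\log s$ drops out in the $\liminf$). The main conceptual point — and the only piece that is not completely automatic — is the \emph{reframing}: rather than comparing Lelong numbers at two different points of $\mathfrak{l}_z$ directly, one transports both computations to a common base point via the scaling $\phi_s$, where the monotonicity of $u$ along holomorphic disks supplied by Lemma \ref{lem-S1-002} gives the required inequality for free.
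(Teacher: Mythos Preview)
Your proof is correct and rests on the same underlying fact as the paper --- the monotonicity $u(sw)\le u(w)$ for $s\in(0,1]$ supplied by Lemma~\ref{lem-S1-002} --- but the packaging is different and noticeably cleaner. The paper works with the $\max$-on-balls formulation of the Lelong number: given a maximum point $\zeta\in\partial B_r(z')$, it builds a two-plane through the origin, $z'$ and $\zeta$, and locates by elementary plane geometry a point $\xi\in\partial B_{r/s}(z)$ on the ray from the origin through $\zeta$ with $|\xi|\ge|\zeta|$, whence $u(\zeta)\le u(\xi)$ and $\max_{B_r(z')}u\le\max_{B_{r/s}(z)}u$. Your observation that the dilation $\phi_s$ carries $B_{r/s}(z)$ bijectively onto $B_r(z')$ and satisfies the \emph{global} inequality $u\circ\phi_s\le u$ renders this geometric construction unnecessary: the lemma becomes a formal consequence of two standard properties of Lelong numbers (biholomorphic invariance and reversal under pointwise comparison). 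The paper's argument is more hands-on; yours is shorter, avoids the choice of an ad hoc $r_0$, and would transfer verbatim to any other local invariant that is monotone under $v\le u$ and invariant under local biholomorphisms.
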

\begin{proof}
It is enough to prove the following. 
For any point $z' = sz, s\in (0,1)$,
we have $\nu_u(z' ) \geq \nu_{u} (z)  $. 
For any small radius $0 < r < r_0$, where we take 
\begin{equation}
\label{r0}
r_0 = \min \left\{ \frac{\delta s |z|}{100}, \frac{ (1-s)s|z| }{100}  \right\},
\end{equation}
the maximum of $u$ on the ball $B_r(z')$ must be obtained on the boundary, i.e. 
there exist a point $\z\in \d B_{r}(z')$ such that we have 
$$ u(\z) = \max_{B_r(z')} u.$$

Next we can think of  the $n$-dimensional complex space $\bC^n$ as the $2n$-dimensional real space $\bR^{2n}$,
by identifying a point $z\in \bC^n$ with a real vector $X_z \in \bR^{2n}$. 
Consider a plane $\mathfrak{p}$ spanned by the two vectors $X_{z'}, X_{\z}$, i.e. 
$$\mathfrak{p}: = \text{span}\{ X_{z'}, X_{\z} \}.  $$
On this plane, the point $z'$ is the centre of the circle $\mathcal{S'} = \d B_{r}(z') \bigcap \mathfrak{p}$,
and we have $\z\in\mathcal{S'}$. Notice that the point $z$ is also in the plane $\mathfrak{p}$.

Let $\mathcal{S} = \d B_R(z) \bigcap \mathfrak{p}$ be another circle centred at $z$ with radius $R = \frac{r}{s}$,
and then the two circles $\mathcal{S'}$ and $\mathcal{S}$ are disjoint by our choices of $r$ and $R$. 
Let  $\mathfrak{r} = \{ tX_{\z} \}, t\in [ 0, +\infty)$ be a ray initiated from the origin passing through the point $\z$. 
It must also intersect with the circle $\mathcal{S}$ by elementary Euclidean geometry. 
Moreover, if $\xi$ is the last intersection point of the ray $\mathfrak{r}$ and the circle $\mathcal{S}$,
then it is clear to have  $| X_{\xi} | \geq |X_{\z}|  $. 
By considering the holomorphic disk $D_{\xi} = \{\lambda \xi\}, \lambda \in \mathbb{D}$,
we  conclude the following estimate by Lemma (\ref{lem-S1-002}): 
\begin{equation}
\label{S1-007} 
u(\z) \leq u( \xi ) \leq \max_{B_{R}(z)} u. 
\end{equation}

Eventually this implies that we have 
$$ \frac{\max_{B_r(z')} u }{ \log r } = \frac{u(\z)}{\log r} \geq \frac{\max_{B_R(z)} u}{ \log R + \log s},$$
for any $r\in (0, r_0)$.
Since $r = sR$ for some fixed  $s$, 
our result follows as

\begin{eqnarray}
\label{S1-008}
 \nu_{u} (z') &=& \lim_{r \rightarrow 0} \frac{\max_{B_r(z')} u }{ \log r }  
 \nonumber\\
&  \geq & \lim_{R\rightarrow 0}   \frac{\max_{B_R(z)} u}{  ( \log R + \log s )} = \lim_{R\rightarrow 0}   \frac{\max_{B_R(z)} u}{ \log R } 
\nonumber\\
& = & \nu_u(z).
\end{eqnarray}

\end{proof}

\begin{prop}
\label{prop-S1-002}
Let $u$ be an $S^1$-invariant plurisubharmonic function on the unit ball $B$, 
which can be extended invariantly to a slightly larger ball $B_{1+\delta}$.
Then its Lelong number $\nu_u (x)$ 
reaches the maximum at the origin. 
\end{prop}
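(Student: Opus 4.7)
The plan is to combine the ray-monotonicity proven in Lemma \ref{lem-S1-003} with the classical upper semicontinuity of the Lelong number function $x \mapsto \nu_u(x)$. Since Lemma \ref{lem-S1-003} shows that the Lelong number is non-increasing along every line segment $\mathfrak{l}_z$ as one moves away from the origin (possibly except at the origin itself), to establish the proposition it suffices to verify that this monotonicity is not broken at the endpoint $x = 0$, i.e. that $\nu_u(0) \geq \limsup_{s\to 0^+} \nu_u(sz)$ for each fixed $z \in \bar B$.

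The key ingredient I would invoke is that the map $x \mapsto \nu_u(x)$ is upper semicontinuous on $B_{1+\delta}$. This follows from the representation
$$\nu_u(x) = \lim_{r\to 0^+} \frac{1}{a_{2n-2}r^{2n-2}} \int_{B_r(x)} \frac{\Delta u}{2\pi},$$
together with two standard facts: for fixed $r$ the quantity on the right-hand side is continuous in $x$ (it is a convolution of the Riesz measure of $u$ with the characteristic function of $B_r$), and the family is monotone decreasing in $r$ by the classical Jensen-type monotonicity. Hence $\nu_u$ is the pointwise decreasing limit of a family of continuous functions, and is in particular upper semicontinuous.

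Putting the pieces together, fix an arbitrary $z \in \bar B$ and a sequence $s_k \to 0^+$. Then $s_k z \to 0$, so upper semicontinuity gives
$$\nu_u(0) \geq \limsup_{k\to\infty} \nu_u(s_k z),$$
while Lemma \ref{lem-S1-003} gives $\nu_u(s_k z) \geq \nu_u(z)$ for every $k$. Combining these two inequalities yields $\nu_u(0) \geq \nu_u(z)$, which is the desired maximum property since $z \in \bar B$ was arbitrary.

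I do not anticipate a serious obstacle in this argument: the ray-monotonicity is precisely the content of the preceding lemma (which in turn rests on the Berman--Berndtsson structure result, Lemma \ref{lem-S1-002}), and the upper semicontinuity of the Lelong number is a well-known fact from Demailly's theory. The only mild subtlety is ensuring the Lelong numbers are defined on the closed ball $\bar B$, which is where the assumption that $u$ extends invariantly to $B_{1+\delta}$ is used, so that small neighbourhoods of boundary points of $B$ are still contained in the domain where $u$ is plurisubharmonic.
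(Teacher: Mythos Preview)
Your argument is correct and in fact takes a more elementary route than the paper's. Both proofs rest on Lemma \ref{lem-S1-003} (ray-monotonicity of the Lelong number away from the origin), but they differ in how they pass the inequality across the endpoint $x=0$. You invoke the upper semicontinuity of $x\mapsto\nu_u(x)$, which is a standard and relatively soft fact from Demailly's theory. The paper instead argues that the Lelong number is constant on each circle $\d D_z$ (by the $S^1$-symmetry) so that the punctured disk $D_z^*$ lies in the superlevel set $\{\nu_u\geq c\}$, and then appeals to Siu's semicontinuity theorem, which says this superlevel set is an analytic subvariety and in particular closed, to conclude $0\in\{\nu_u\geq c\}$. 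Your approach avoids Siu's theorem entirely, which is a genuine simplification; the paper's approach, on the other hand, records the $S^1$-invariance of the Lelong number along the way and sets up a template that it reuses verbatim for the integrability index in Theorem \ref{thm-main2}.

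One small technical caveat: in your justification of upper semicontinuity you assert that for fixed $r$ the averaged Riesz mass $x\mapsto \frac{1}{a_{2n-2}r^{2n-2}}\int_{B_r(x)}\frac{\Delta u}{2\pi}$ is continuous in $x$. For a general positive Radon measure this map is only upper semicontinuous when one uses closed balls (and lower semicontinuous for open balls); continuity can fail if the measure charges spheres. This does not harm your conclusion, since the Lelong number is the \emph{infimum} over $r$ of these quantities (by the monotonicity you mention), and an infimum of upper semicontinuous functions is still upper semicontinuous. So the end result stands, but the word ``continuous'' should be replaced by ``upper semicontinuous'' in that step.
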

\begin{proof}
Suppose a point $z\in B$ belongs to the polar set of $u$, 
and $u$ has its Lelong number $\nu_u(z) = c$ at this point.
We claim that the punctured disk $D^*_z$
is contained in the set $\{ \nu_u(x) \geq  c\} $. 
Then the whole disk $D_z$ must be contained in the same set,
since the set $\{ \nu_u(x) \geq  c\} $ is an analytic subset of the unit ball 
by Siu's decomposition theorem \cite{Siu}.

In fact, we can consider a circle as
the boundary of the disk $\d D_z  = \{e^{i\theta}z \}, \theta\in\bR$. 
By our previous Lemma (\ref{lem-S1-003}), the claim will be proved if we can prove for all $z' \in \d D_z$.
$$ \nu_{u} (z') = \nu_u (z).$$

Let $w$ be a maximum point of $u$ on a small ball $B_r(z)$ centred at $z$,i.e. 
$$ u(w) = \max_{B_r(z)} u. $$
Then we can assume that the point $w$ appears on the boundary $\d B_r(z)$. 
For any $z' = e^{i\theta} z$, the point $w' = e^{i\theta} z$ is on the boundary $\d B_r(z')$
since we have 
$$ |z - w| = |e^{i\theta}\cdot (z - w)| = |z' - w'|, $$
and we have $u(w)  = u(w')  \leq \max_{B_r(z') } u$.
Hence the Lelong number is decreasing under this $S^1$-action as 
\begin{equation}
\label{S1-008}
\nu_u(z) = \lim_{r\rightarrow 0} \frac{\max_{B_r (z)} u }{\log r} \geq \lim_{r\rightarrow 0} \frac{\max_{B_r (z')} u }{\log r} = \nu_u (z').
\end{equation}
Similarly, we can prove $\nu_u(z) \leq \nu_u(z')$ by considering the reversed $S^1$-action, i.e. $z= e^{-i\theta} z'$,
and our result follows.

\end{proof}

Finally, Theorem (\ref{thm-main}) is proved by 
combining with  Lemma (\ref{lem-S1-001}), Proposition (\ref{prop-S1-002}) and  Proposition (\ref{prop-S1-001}).

\begin{rem}
Besides the unit ball, 
our arguments in Lemma (\ref{lem-S1-003}) and Proposition (\ref{prop-S1-002}) also work on any other balanced domains in $\bC^n$.
Then we can conclude that the Lelong number $\nu_u(x)$ always obtains its maximum at the origin,
for any $S^1$-invariant plurisubharmonic function $u$ defined on a balanced domain $\Omega$.
\end{rem}

\subsection{The sharp estimate}
The inverse of the complex singularity exponent is called the integrability index \cite{Kis} of a plurisubharmonic function $u$ at a point $x$,
and we denote it by 
$$\iota_{u}(x) = \cC^{-1}_u (x). $$

In this subsection, we will show that this integrability index also reaches its maximum at the origin for an $S^1$-invariant plurisubharmonic function,
and there is an explicit formula for the Lelong number of the Schwarz symmetrization at the origin.

\begin{theorem}
\label{thm-main2}
Let $u$ be an $S^1$-invariant plurisubharmonic function on the unit ball $B$, 
which can be extended invariantly to a slightly larger ball $B_{1+\delta}$. Let $\hat u$ be its Schwarz symmetrization. 
Then we have 
\begin{equation}
\label{S1-009}
\iota_u(0) = \max_{x\in B} \iota_u(x).
\end{equation}
In particular, the following formula holds: 
\begin{equation}
\label{S1-010}
\iota_u(0) = \frac{\nu_{\hat u} (0)}{n} = \lim_{t\rightarrow -\infty} \frac{2t}{\log \left|    \{ u<t \}  \right|}.
\end{equation}
\end{theorem}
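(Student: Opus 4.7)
The plan is to mirror the proof of Theorem \ref{thm-main}: first show that the integrability index $\iota_u(\cdot)$ also attains its maximum at the origin (the analogue of Proposition \ref{prop-S1-002}), and then evaluate this maximum through the Schwarz symmetrization. It is convenient throughout to work with $\cC_u(x) = 1/\iota_u(x)$, which behaves naturally under nested inclusions of balls.

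For the first claim, equivalently $\cC_u(x) \geq \cC_u(0)$ for every $x \in B$, I proceed in three steps. The $S^1$-invariance of $u$ and the rotation invariance of Lebesgue measure yield $\cC_u(e^{i\theta}z) = \cC_u(z)$. To handle the radial direction, fix $z' = sz$ with $s \in (0,1)$ and any $c < \cC_u(z')$, so that $e^{-2cu} \in L^1(B_\rho(z'))$ for some $\rho > 0$. The dilation $w \mapsto sw$ sends $B_{\rho/s}(z)$ into $B_\rho(z')$, and Lemma \ref{lem-S1-002} gives $u(sw) \leq u(w)$ along every ray from the origin, hence
\begin{equation*}
\int_{B_{\rho/s}(z)} e^{-2cu(w)}\,dw \;\leq\; \int_{B_{\rho/s}(z)} e^{-2cu(sw)}\,dw \;=\; s^{-2n}\int_{B_\rho(z')} e^{-2cu(w')}\,dw' \;<\;+\infty,
\end{equation*}
forcing $\cC_u(z) \geq c$, and so $\cC_u(z) \geq \cC_u(z')$. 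Finally, $\cC_u$ is lower semicontinuous (an $L^1$-bound on a fixed ball passes to any concentric subball around a nearby centre), so the limit $s \to 0$ gives $\cC_u(0) \leq \liminf_{s\to 0}\cC_u(sz) \leq \cC_u(z)$.

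The value $\iota_u(0)$ is then obtained through the chain
\begin{equation*}
\iota_u(0) \;=\; \iota_u(B) \;=\; \iota_{\hat u}(B) \;=\; \iota_{\hat u}(0) \;=\; \nu_{\hat u}(0)/n.
\end{equation*}
The first equality combines the step above with a finite covering of $\ol{B}$ using the invariant extension to $B_{1+\delta}$, as in Proposition \ref{prop-S1-001}; the second is immediate from $\int_B e^{-2cu}\,d\lambda = \int_B e^{-2c\hat u}\,d\lambda$ (Proposition \ref{prop-P001}-(iii)); the third follows because $\hat u$ is continuous on $\ol{B} \sm \{\ul{O}\}$, confining the singularity to the origin; and the fourth is the classical sharp identity for radial plurisubharmonic functions, obtained by computing $\int_0^1 e^{-2f(r)/c}r^{2n-1}\,dr$ using the convexity of $t \mapsto f(e^t)$. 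The limit formula then follows immediately: by equimeasurability and radiality, $|\{u<t\}| = |\{\hat u<t\}| = a_{2n}r(t)^{2n}$ where $\hat u \equiv t$ on $\{|x| = r(t)\}$, hence
\begin{equation*}
\frac{2t}{\log|\{u<t\}|} \;=\; \frac{2\hat u(r(t))}{2n\log r(t) + \log a_{2n}} \;\longrightarrow\; \frac{\nu_{\hat u}(0)}{n}, \qquad t \to -\infty.
\end{equation*}

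The main subtle point I anticipate is the radial identity $\iota_{\hat u}(0) = \nu_{\hat u}(0)/n$. Only the inequality $\iota_{\hat u}(0) \geq \nu_{\hat u}(0)/n$ is generic, from (\ref{intro-000}); the reverse relies on the convexity of $t \mapsto \hat u(e^t)$, and at the critical exponent requires handling the subleading correction that can appear in the radial profile.
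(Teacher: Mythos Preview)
Your proposal is correct and follows the same overall architecture as the paper: first establish that $\cC_u$ is non-decreasing along each ray $\mathfrak{l}_z$ away from the origin, then pass to the origin by semicontinuity, and finally run the chain $\iota_u(0)=\iota_u(B)=\iota_{\hat u}(B)=\iota_{\hat u}(0)=\nu_{\hat u}(0)/n$ using Proposition~\ref{prop-P001}-(iii) and the Appendix (Proposition~\ref{prop-app-002}).

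Where you differ is in the proof of the radial monotonicity of $\cC_u$. The paper (Lemma~\ref{lem-S1-005}) goes through the sublevel-set characterization $\cE_u(x)$ and explicitly invokes the openness conjecture to identify $\cC_u(x)$ with the boundary of $\cE_u(x)$; it then rescales the sublevel sets via Lemma~\ref{lem-S1-004}. Your direct change-of-variables estimate
\[
\int_{B_{\rho/s}(z)} e^{-2cu(w)}\,dw \;\leq\; s^{-2n}\int_{B_\rho(z')} e^{-2cu}\,d\lambda
\]
is more elementary: it uses only Lemma~\ref{lem-S1-002} and the Jacobian of a dilation, and avoids both the $\cE_u$ description and the openness conjecture entirely. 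Likewise, for the passage to the origin the paper appeals to the lower semicontinuity of $\cC_u$ in the holomorphic Zariski topology (Demailly--Koll\'ar), while your argument only needs the trivial Euclidean lower semicontinuity of $\cC_u$ (an $L^1$ bound on $B_\rho(x_0)$ restricts to $B_{\rho/2}(x)$ for $|x-x_0|<\rho/2$). So your route is a genuine simplification of the paper's proof, at no loss of generality. One small point to make explicit: in your change-of-variables step you should note that $\rho$ can be taken small enough that $B_{\rho/s}(z)\subset B_{1+\delta}$, which the invariant extension guarantees.
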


According to Kiselman's work on the integrability index  \cite{Kis}, 
Theorem (\ref{thm-main2}) immediately  implies that the estimate (equation (\ref{S1-001})) we obtained in Theorem (\ref{thm-main}) is sharp.

It is enough to prove the complex singularity exponent $\cC_u(x)$ always reaches its minimum at the origin, i.e. 
\begin{equation} 
\label{S1-0101}
\cC_u(0) = \min_{x\in B} \cC_u(x).  
\end{equation}
Notice that the symmetrization $\hat u$ is a radially symmetric, plurisubharmonic function with a single pole at the origin, 
and then it is well known (Proposition (\ref{prop-app-002}), Appendix) that we have 
\begin{equation}
\label{S1-0102}
\nu_{\hat u}(0) = n \iota_{\hat u}(0) = n   \iota_{\hat u}(B),
\end{equation}
for any such function. 
Therefore, the formula (equation (\ref{S1-010})) is obtained as in Proposition (\ref{prop-S1-001}): 
\begin{equation}
\label{S1-0103}
\nu_{\hat u}(0) = n \iota_{\hat u}(B) = n \iota_{u}(B) = n\iota_u (0).
\end{equation}

We begin with a lemma from Euclidean geometry. 
The proof is elementary, and we recall it for the convenience of the reader. 

\begin{lemma}
\label{lem-S1-004}
Let $z, z', s,   r, R$ be chosen as in Lemma (\ref{lem-S1-002}). 
For any measurable set $A\subset B_R(z)$, the rescaled set 
$ A' = s\cdot A $ will be contained in the ball $B_r(z')$. 
Moreover, we have 
$$ |A'| = s^{2n} |A|. $$
\end{lemma}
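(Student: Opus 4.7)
The plan is to recognize that the rescaling map $\phi: \bC^n \to \bC^n$, $\phi(x) = sx$, is precisely the map that transports $B_R(z)$ onto $B_r(z')$. Since $z' = sz$ and $R = r/s$ by construction, a point $x$ satisfies $|x - z| < R$ if and only if $|sx - sz| = s|x-z| < sR = r$, i.e.\ $\phi(x) \in B_r(z')$. Thus $\phi(B_R(z)) = B_r(z')$ and in particular $A' = \phi(A) \subset B_r(z')$ for any subset $A \subset B_R(z)$; the same computation shows $\phi(A)$ remains measurable whenever $A$ is.

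For the volume identity, I will view $\phi$ as a $\bR$-linear automorphism of $\bR^{2n}$ that acts as multiplication by $s$ on every coordinate. Its determinant is therefore $s^{2n}$, and the standard change of variables (or the translation-invariance and homogeneity of Lebesgue measure) yields
\begin{equation*}
|A'| = |\phi(A)| = s^{2n} |A|.
\end{equation*}

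This lemma is essentially a bookkeeping statement about the dilation used in the proof of Lemma \ref{lem-S1-003}, so there is no real obstacle. The only thing one must be slightly careful about is to remember that the underlying Euclidean space has real dimension $2n$, which is exactly why the factor is $s^{2n}$ and not $s^n$; this is the scaling factor that will be needed in the subsequent integrability comparisons between $u$ near $z$ and $u$ near $z' = sz$.
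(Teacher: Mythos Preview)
Your proof is correct and follows essentially the same idea as the paper: both recognize that the dilation $x\mapsto sx$ scales distances by $s$ (so $B_R(z)$ maps into $B_r(z')$) and Lebesgue measure by $s^{2n}$. The paper phrases the first part via similar triangles in the plane spanned by $X_{z'}$ and $X_\zeta$, and the second via a Hausdorff-measure covering argument, whereas you go straight to the norm identity $|sx-sz|=s|x-z|$ and the Jacobian $\det(s\cdot\mathrm{Id}_{\bR^{2n}})=s^{2n}$; your version is the cleaner one.
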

\begin{proof}
For any point $\z\in A'$, we can write $\z = s\cdot \xi $ for some $\xi\in A$. 
After identifying $\bC^n$ with $\bR^{2n}$, 
the two vectors $X_{\z}, X_{z'} \in \bR^{2n}$
will span a plain $\mathfrak{p}$ passing through the origin,
and  the two points  $z, \xi$ are also in this plane.  
Notice that the triangle  built by the three points $\{\underline{O}, \z', \z \}$ 
are similar to the triangle built by $\{ \underline{O}, z, \xi \}$. 
Therefore we have $| \z - z' | = s |\xi -z| < sR$, and hence $A'\subset B_r(z')$ by definition.

Next, the volume of a set $E$ can be taken as its $2n$-dimensional Hausdroff measure $\mathcal{H}^{2n}(E)$.  
Suppose the set $A$ is covered by a union of small open balls, i.e. $A \subset \bigcup_{j} B_{\ep} (x_j)$.
From what we just proved, 
the set  $A'$ will be covered by the union of their rescalings as 
$$ A' \subset \bigcup_j s\cdot B_{\ep}(x_j),$$
and the volume of each ball is rescaled by a factor $s^{2n}$. 
Hence we have $\mathcal{H}^{2n} (A') \leq s^{2n} \mathcal{H}^{2n} (A)  $,
and the reversed inequality follows from a similar argument. 
\end{proof}

According to Demailly-Koll\'ar \cite{DK}, there is another way to describe the complex singularity exponent. 
Let $u$ be a plurisubharmonic function on $\Omega$. 
For any point $x\in\Omega$,
we can consider the following set in $\bR$: 
$$\cE_u(x) = \left\{  c\geq 0;\  e^{-2ct}|\{ u < t \} | \ \emph{is bounded as $t\rightarrow -\infty$  for some $U\ni x$}       \right\}.$$
Then it is easy to see the following fact: 
$$ \cC_u(x) = \sup_{  \cE_u(x) } c. $$
By the famous openness conjecture (\cite{DK}, \cite{Bo1}),
we even have $\cC_u(x)\notin \cE_u(x)$.
In other words, any real number $c>0$ is no less than  $\cC_u(x) $ if, and only if 
for any $R>0$ small enough, and any  $k\in \mathbb{Z}^+$,
there exist a $t<0$ (depends on $R$ and $k$) such that we have 
\begin{equation}
\label{S1-011}
e^{-2ct} \left|\{ u < t \} \bigcap B_{R}(x) \right| > k. 
\end{equation}


Bearing this in mind,
under the assumption of Theorem (\ref{thm-main2}), we can argue as in Lemma (\ref{lem-S1-003}).  
Recall that we denote $\mathfrak{l}_z = \{ s\cdot z \},  s\in [0,1]$ by the line segment connecting the origin and a point $z\in B$. 

\begin{lemma}
\label{lem-S1-005}
The complex singularity exponent $\cC_u(x)$ is non-decreasing along $\mathfrak{l}_z$ possibly except at the origin. 
\end{lemma}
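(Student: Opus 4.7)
The plan is to show directly that $\cE_u(z')\subset \cE_u(z)$ whenever $z'=sz$ with $s\in(0,1)$; taking suprema then yields $\cC_u(z')\leq \cC_u(z)$, which is exactly the claimed monotonicity of $\cC_u$ along $\mathfrak{l}_z$. The argument should parallel Lemma (\ref{lem-S1-003}) almost step for step, with the ``max of $u$ over $B_r$'' functional that controls the Lelong number replaced by the ``volume of $\{u<t\}\cap B_r$'' functional that controls the complex singularity exponent.

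First, I would reinterpret Lemma (\ref{lem-S1-002}) as a \emph{star-shapedness} of each sub-level set: since $u$ restricted to each complex line through the origin is radial and non-decreasing in the radius, $u(s\xi)\leq u(\xi)$ for all $\xi\in B$ and $s\in[0,1]$, so
$$ s\cdot\{u<t\}\ \subset\ \{u<t\} \quad\text{for every } t\in\bR. $$
Second, I would borrow the geometric setup of Lemma (\ref{lem-S1-003}): fix $r<r_0$ and set $R=r/s$, so that $B_r(z')$ and $B_R(z)$ both lie inside $B_{1+\delta}$. Applying Lemma (\ref{lem-S1-004}) to the set $A=\{u<t\}\cap B_R(z)$, one obtains $sA\subset B_r(z')$ with $|sA|=s^{2n}|A|$. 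Combined with the star-shapedness, $sA\subset\{u<t\}\cap B_r(z')$, yielding
$$ |\{u<t\}\cap B_R(z)|\ \leq\ s^{-2n}\,|\{u<t\}\cap B_r(z')|. $$

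Third, I would multiply by $e^{-2ct}$ and let $t\to -\infty$: if $c\in\cE_u(z')$, witnessed by boundedness of $e^{-2ct}|\{u<t\}\cap B_r(z')|$ as $t\to -\infty$, then $e^{-2ct}|\{u<t\}\cap B_R(z)|$ is also bounded, so $c\in\cE_u(z)$. Supping over all admissible $c$ gives the desired $\cC_u(z')\leq\cC_u(z)$.

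The main obstacle is conceptual rather than technical: one must recognize that it is the Demailly--Koll\'ar sub-level-set characterization of $\cC_u$, rather than the original $L^1$ definition, that interacts cleanly with the scaling $\xi\mapsto s\xi$. Integrating $e^{-2cu}$ does not transform nicely under this scaling, whereas volumes of sub-level sets do, thanks precisely to Lemma (\ref{lem-S1-004}). Once this observation is in place, the choice of $r_0$ in (\ref{r0}) already guarantees that both $B_r(z')$ and $B_R(z)$ sit inside the extension domain $B_{1+\delta}$, and the remainder of the argument is essentially volume bookkeeping that runs in perfect parallel with the Lelong-number monotonicity of Lemma (\ref{lem-S1-003}).
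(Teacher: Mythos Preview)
Your argument is correct and coincides with the paper's proof in its geometric core: both use the star-shapedness of the sub-level sets (from Lemma~\ref{lem-S1-002}) together with the volume scaling of Lemma~\ref{lem-S1-004} to compare $|\{u<t\}\cap B_R(z)|$ with $|\{u<t\}\cap B_r(z')|$. The only difference is the logical packaging: the paper fixes $c=\cC_u(z)$, invokes the openness conjecture to assert $c\notin\cE_u(z)$, transfers the resulting unboundedness to $z'$, and concludes $c\notin\cE_u(z')$; you instead prove the inclusion $\cE_u(z')\subset\cE_u(z)$ directly and take suprema. Your formulation is marginally cleaner in that it avoids the appeal to openness altogether, but the two arguments are otherwise the same. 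One small wording issue: the radius $r$ witnessing $c\in\cE_u(z')$ is handed to you rather than chosen, so you should say ``shrinking $r$ below $r_0$ if necessary'' rather than ``fix $r<r_0$''.
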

\begin{proof}
It is enough to prove  $ \cC_u(z') \leq \cC_u(z)$ for all $z' =sz, s\in (0,1)$.
For this purpose,  we can always assume $\cC_u (z) = c \in (0, +\infty)$.
Take any $k\in\mathbb{Z}^+$ and $R$ small enough 
such that equation (\ref{S1-011}) holds for some $t< 0$.  
Denote $A$ by the set 
$$A: = \{ u < t \} \bigcap B_{R}(z).  $$
For any fixed $s\in(0,1)$, the radius $r = sR$ 
will be smaller than $r_0$ ( equation(\ref{r0}))
when $R$ is small. 
Then the rescaled set $A' = sA$ is contained in the ball $B_r(z')$ by Lemma (\ref{lem-S1-004}). 

Writing $\z = s\cdot \xi$ for any point $\z \in A'$, we have $u(\z) \leq u(\xi)$. 
This is again because $u$ is non-decreasing along the line segment $\mathfrak{l}_{\xi}$
by Lemma (\ref{lem-S1-002}). Hence we have 
$$ A' \subset \{ u < t \} \bigcap B_{r}(z'). $$
Therefore, the following estimate is true in the ball $B_r(z')$
\begin{equation}
\label{S1-012}
 e^{-2ct} |  \{ u < t \} \bigcap B_{r}(z')    | \geq e^{-2ct} |A'| =  s^{2n} e^{-2ct} |A| > s^{2n} k.
\end{equation}
This implies $c\notin \cE_u(z')$, and then $ \cC_u(z') \leq \cC_u(z)$ follows.

\end{proof}

\begin{proof}[Proof of Theorem (\ref{thm-main2})]
It is left to prove equation (\ref{S1-0101}).
Suppose a point $z\in B$ is contained in the polar set of $u$, 
and we can assume $\cC_u(z) =c \in (0, +\infty)$ as before. 
Then we claim that the punctured disk $D_z^*$ is contained in the set $\{\cC_u (x) \leq c \}$. 
Since the complex singularity exponent is lower semi-continuous w.r.t the holomorphic Zariski topology \cite{DK},
the whole disk $D_z$ must be contained in the same set, and we conclude our proof. 

Based on Lemma (\ref{lem-S1-005}), it is again enough to prove that the complex singularity exponent 
is invariant on the boundary circle $\d D_z$.
This fact is true because the distance function and the measure are also invariant under the $S^1$-action, i.e.  for all $\theta\in\bR$, we have 
$$ |z - w| = | e^{i\theta}(z-w)|, $$
and 
$$ |A| = |e^{i\theta} A|,$$
for all $z, w\in B$ and any measurable subset $A\subset B$.
Then this invariance result follows from a similar argument as in Proposition (\ref{prop-S1-002}) and Lemma (\ref{lem-S1-005}).

\end{proof}

\begin{rem}
Besides the unit ball, our previous arguments also work on any other balanced domains in $\bC^n$. 
Therefore, for any $S^1$-invariant plurisubharmonic function $u$ on a balanced domain $\Omega$, 
its integrability index $\iota_u(x)$ always obtains its maximum at the origin, and formula (\ref{S1-010})
holds. 
\end{rem}

\section{ Toric plurisubharmonic functions}
In this section, we would like to study the \emph{residue Monge-Amp\`ere mass}
$$ \tau_u (0) = (dd^c u)^n|_{\{ \underline{O}\}} $$
of  a plurisubharmonic function $u$ at the origin $\underline{O}$. 
However, this quantity is not always well defined as we can see from Cegrell's example \cite{Ceg1}.
Even if it is well defined, there are only few ways to handle the complex Monge-Amp\`ere measure under the Schwarz symmetrization. 
Therefore, we will investigate plurisubharmonic functions with stronger symmetry than $S^1$-invariant at this stage.

A domain $\Omega\subset \bC^n$ is called a \emph{Reinhardt domain} if it is invariant under the following $(S^1)^{\times n}$-action: 
$$z \rightarrow  (e^{i\theta_1}z_1, \cdots, e^{i\theta_n } z_n), $$
for all $\theta_j \in \bR, j =1,\cdots, n$. 
A function $f$ defined on a Reinhardt domain $\Omega$ is called \emph{ toric }  if it satisfies 
$$f(e^{i\theta_1}z_1, \cdots, e^{\theta_n} z_n) = f(z)$$ 
for all $\theta_j \in \bR, j =1,\cdots, n$ at every $z\in \Omega$.

Let $\Omega\subset\bC^n $ be a bounded balanced Reinhardt domain, and $u$ be a toric plurisubharmonic function on it. 
As before, we assume that the polar set $\{ u = -\infty \}$ is non-empty and $\sup_{\Omega} u = \sup_{\d \Omega} u = 0$. 
Furthermore, we also assume that the function $u$ has only a single pole at the origin,
and then  its Monge-Amp\`ere measure $(dd^c u)^n$
 is well defined in terms of of the Bedford-Talyor-Demailly product \cite{Dem3}. 
 In particular, its residue Monge-Amp\`ere mass $\tau_u(0)$ is well defined. 

Now its symmetrization $\hat u: \hat\Omega \rightarrow \bR$ is a radially symmetric, non-decreasing
plurisubharmonic function, with only a single pole at the origin. 
It is well known that its residue mass is well defined and we further have 
\begin{equation}
\label{tor-0001}
\tau_{\hat u} (0) =  [\nu_{\hat u} (0)]^n,
\end{equation}
for such function as $\hat u$ (Proposition (\ref{prop-app-001}), Appendix).  
Then our previous identity (equation (\ref{S1-010}))  implies the following domination phenomenon of residue masses. 

\begin{theorem}
\label{thm-main3}
Let $u$ be a toric plurisubharmonic function on the unit ball $B$ with a single pole at the origin, 
which can be extended invariantly to a slightly larger ball $B_{1+\delta}$. Let $\hat u$ be its Schwarz symmetrization. 
Then we have 
\begin{equation}
\label{tor-0002}
\tau_{\hat u} (0) \leq  \tau_{ u} (0). 
\end{equation}
\end{theorem}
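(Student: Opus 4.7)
The strategy is to reduce the inequality to a lower bound on the toric residue mass due to Rashkovskii. Since $\hat u$ is a radially symmetric, non-decreasing plurisubharmonic function with a single pole at the origin (by Theorem (\ref{thm-S1-001}) together with the convexity of $t \mapsto \hat u(e^t)$ established earlier), Proposition (\ref{prop-app-001}) in the Appendix yields $\tau_{\hat u}(0) = [\nu_{\hat u}(0)]^n$. Combining this with the identity $\nu_{\hat u}(0) = n\,\iota_u(0)$ of Theorem (\ref{thm-main2}), one obtains
\begin{equation*}
\tau_{\hat u}(0) \;=\; n^n\,[\iota_u(0)]^n.
\end{equation*}
The theorem therefore reduces to establishing the lower bound
\begin{equation*}
\tau_u(0) \;\geq\; n^n\,[\iota_u(0)]^n
\end{equation*}
for the toric plurisubharmonic function $u$ with an isolated singularity at the origin.

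For this lower bound I would invoke Rashkovskii's estimate of the residue Monge-Amp\`ere mass in terms of refined Lelong numbers \cite{Rash}. Writing $u(z) = \phi(\log|z_1|,\ldots,\log|z_n|)$ for a convex, componentwise non-decreasing function $\phi$ on $\bR^n_-$, the refined Lelong number $\nu_u(0,a)$ coincides with the asymptotic indicator $\psi(a) = \lim_{t\to -\infty} t^{-1}\phi(ta)$, which is concave and positively homogeneous on $\bR^n_+$. Rashkovskii's formula expresses $\tau_u(0)$ as $n!$ times the covolume of the Newton convex body associated to $\psi$, while Kiselman's formula for the integrability index of a toric psh function gives
\begin{equation*}
\iota_u(0) \;=\; \sup_{a \in \bR^n_+}\, \frac{\nu_u(0,a)}{a_1 + \cdots + a_n}.
\end{equation*}
Combining these two descriptions and applying an AM-GM estimate on the Newton body then produces the required inequality, with equality already achieved on the diagonal Reinhardt model $u = \log \max_j |z_j|$ (where both sides equal $1$).

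The main obstacle is this convex-geometric step: converting Rashkovskii's covolume formula together with Kiselman's supremum into the clean sharp inequality $\tau_u(0) \geq n^n [\iota_u(0)]^n$, particularly in the possibly degenerate cases where the indicator $\psi$ is unbounded or where the supremum in Kiselman's formula is not attained on a bounded subset of $\bR^n_+$. Once this inequality is in place, the theorem follows immediately by chaining it with the equality $\tau_{\hat u}(0) = n^n [\iota_u(0)]^n$ derived in the first step.
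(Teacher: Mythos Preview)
Your approach is essentially the paper's: reduce to $\tau_u(0)\geq n^n[\iota_u(0)]^n$ via $\tau_{\hat u}(0)=[\nu_{\hat u}(0)]^n=n^n[\iota_u(0)]^n$, then invoke Rashkovskii, Kiselman, and AM--GM. The difference is that you route through Rashkovskii's Newton-body covolume \emph{formula} and flag the ensuing convex-geometric step as the ``main obstacle,'' whereas the paper uses Rashkovskii's directional \emph{inequality}
\[
\tau_u(0)\;\geq\;\frac{[\nu_u(0,a)]^n}{a_1\cdots a_n}\qquad\text{for every }a\in\bR_+^n,
\]
valid for any plurisubharmonic function with an isolated pole at the origin. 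With this in hand the step you worry about becomes a one-liner: restrict to the simplex $\sum_j a_j=1$, apply AM--GM to get $a_1\cdots a_n\leq n^{-n}$, hence $\tau_u(0)\geq n^n[\nu_u(0,a)]^n$ for every such $a$, and take the supremum using Kiselman's identity $\iota_u(0)=\sup\{\nu_u(0,a):\sum a_j=1\}$. No Newton-body analysis or treatment of degenerate directions is needed.
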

\begin{proof}
From our identity (\ref{S1-010}) and equation (\ref{tor-0001}), we obtain 
\begin{equation}
\label{tor-0003}
\tau_{\hat u} (0) =  n^n [\iota_u(0)]^n. 
\end{equation}
However, Kiselman \cite{Kis} proved the following identity for all toric plurisubharmonic functions 
$$ \iota_u (0) = \sup \{   \nu_u(0, a); \ \ a\in \bR_+^n, \ \sum_{j=1}^n a_j =1  \},$$
 where $\nu_u(0,a)$ is the refined Lelong number (see equation (\ref{intro-0040})) of $u$  at the origin
 in the direction 
 $$a= (a_1,\cdots, a_n), \ \ \forall a_j >0. $$ 

On the other hand, Rashkovskii \cite{Rash} proved a lower bound of the residue mass 
for all plurisubharmonic functions with a single pole at the origin as 
$$ \tau_u(0) \geq \frac{ [ \nu_u(0,a) ]^n }{ a_1\cdots a_n}, \ \ \ \forall a\in \bR_+^n. $$

Combining equation (\ref{tor-0003}), Kiselman's identity and  Rashkovskii's estimate, it is enough to prove that 
for all refined Lelong number $\nu_u(0,a)$ where $a\in \bR^n_+$ and  $\sum_{j=1}^n a_j =1$, we have 
\begin{equation}
\label{tor-0004}
 n\nu_u(0,a) \leq \frac{\nu_u(0,a)}{ (a_1\cdots a_n )^{\frac{1}{n}}}, 
\end{equation}
but this follows from the  inequality of arithmetic and geometric means 
$$ n ( a_1 \cdots a_n )^{\frac{1}{n}} \leq (a_1 + \cdots + a_n) = 1, $$
and our result follows.

\end{proof}

\begin{rem}
\label{rem-tor-1}
Again, our assumption on the domain is just for simplicity. 
This domination phenomenon for the residue masses under symmetrization 
occurs for all toric plurisubharmonic functions with a single pole at the origin, 
defined on any balanced Reinhardt domain $\Omega\subset \bC^n$.
\end{rem}

Next we will  give some examples of toric plurisubharmonic functions on the unit ball $B$ in $\bC^2$.
First, the following example shows that the estimate we obtained in Theorem (\ref{thm-main}) is sharp.  

\begin{example}
\label{ex-tor-001}
Consider the following function 
$$u(z) = \log |z_1| $$ 
defined on $B\subset\bC^2$.
It is clear that the Lelong number of $u$ at the origin is equal to $1$.
However, the sub-level set of $u$ is a ``complex cylinder" in the unit ball, i.e. 
$ \{ u < \log R \} = \{z\in B; \ \ |z_1| < R \}$, and then we have its volume 
$$ | \{ u < \log R \} | = \pi^2 (R^2 - R^4/2). $$
Therefore, the Lelong number of its symmetrization $\hat u$ at the origin is $2$,
since we have 
$$ \nu_{\hat u} (0) = \lim_{R\rightarrow 0} \ \frac{ 4 \log R}{\log (R^2 - R^4/2) + \log 2}=2,  $$
and this implies $\iota_{ u} (0 ) =1$ and $\tau_{\hat u}(0) = 4$.

\end{example}

If the function $u$ is already radially symmetric, then its Schwarz symmetrization is itself. 
Therefore, we have $\nu_u(0) = \nu_{\hat u} (0)$ in this case.
Next, we provide an example where the value $\nu_{\hat u}(0)$ is in between.

\begin{example}
\label{ex-tor-002}
Consider the following function on $B$
$$ u(z) = \log (|z_1|^2 + |z_2|^{\frac{1}{2}} ). $$
The Lelong number of $u$ at the origin is equal to $\frac{1}{2}$, since we have 
\begin{eqnarray}
\label{tor-001}
 \log (|z_1|^2 + |z_2|^{\frac{1}{2}} ) - \log 2 &\leq& \max  \{\log |z_1|^2 , \log |z_2|^{\frac{1}{2}} \}
 \nonumber\\
 &\leq & \log (|z_1|^2 + |z_2|^{\frac{1}{2}} ),
 \end{eqnarray}
 and the well known equation $\nu_{\max\{u,v \}} (x) = \min\{\nu_u(x), \nu_v(x) \}$ for two plurisubharmonic functions $u, v$ \cite{abook}. 
 By Demailly's comparison theorem \cite{Dem3}, it follows from equation (\ref{tor-001}) that we have $\tau_{u}(0) = 1$. 
 On the other hand, the sub-level set of $u$ is an ellipsoid: 
 $$\{ u < 2\log R\} = \{  |z_1|^2 + |z_2|^{\frac{1}{2}} < R^2 \}, $$
 and its volume can be computed as 
 \begin{eqnarray}
 \label{tor-002}
| \{ u < 2\log R\} | &=& 4\pi^2 \int_0^R r_1 dr_1 \int_0^{(R^2 - r_1^2)^2} r_2 dr_2
\nonumber\\
&=& 2\pi^2 \int_0^R (R^2 - r_1^2)^4 r_1 dr_1 
\nonumber\\
&=& O(R^{10}).
 \end{eqnarray}
 Therefore, we have 
 $$\nu_{\hat u} (0) = \lim_{R\rightarrow 0} \frac{ 2\log R}{ \frac{1}{4} ( \log R^{10} + O(1) ) } =  \frac{4}{5}, $$ 
 and this implies  $\iota_u(0) = \frac{2}{5}$ and $\tau_{\hat u} (0) = \frac{16}{25} < \tau_u (0)$.

\end{example}

In general, Demailly \cite{Dem3} considered the following function on $B$ for any $0< \ep < 1 $ as 
$$ u(z) = \max \{ \ep^{-1} \log|z_1|, \ep \log |z_2| \}.$$
One can show that its residue mass is always $1$ at the origin, whereas its Lelong number at the origin is $\ep$.
In this case, we have for its symmetrization $\nu_{\hat u}(0) =  2 (\ep + \ep^{-1})^{-1}$, $ \iota_{u}(0) = ( \ep + \ep^{-1})^{-1}$ and 
$\tau_{\hat u}(0) = 4(\ep + \ep^{-1})^{-2} < 1$.

The next example was provided by Kiselman \cite{Kis1}, 
and we can see that the Monge-Amp\`ere measure near the origin is indeed ``regularised "
by the Schwarz symmetrization. 

\begin{example}
\label{ex-tor-003}
Consider the following function on $B_{1/2}$
$$ u (z) = (-\log |z_1|)^{\frac{1}{2}} (|z_2|^2 - 1). $$
This function is smooth outside the hyperplane $H = \{z_1 =0 \}$, 
and its Monge-Amp\`ere measure is 
$$ \det(u_{j\bar k}) = \frac{ 1 - 2 |z_2|^2}{ 8n |z_1|^2 (-\log |z_1)},   $$
on $B_{1/2} \backslash H$.
This measure will accumulate infinite mass near any point on $H$, 
and we can say $\tau_u(0) = +\infty$. 

However, it is easy to see that the Lelong number of $u$ is zero everywhere, 
and hence the integrability index $\iota_u(x)$ is also zero for all $x\in B_{1/2}$.
Therefore, we have 
$ \tau_{\hat u}(0) = 4 [\iota_u(0)]^2 = 0$.

\end{example}

Finally, we present Cegrell's example \cite{Ceg1}, for which the residue mass is not uniquely determined by decreasing sequences. 
\begin{example}
\label{ex-tor-004}
Consider the function on $B$ 
$$ u(z) =2 \log |z_1 z_2|.  $$
It is easy to see that the Lelong number $\nu_u(0) = 4$.
However, its residue Monge-Amp\`ere mass can not be well defined at the origin. 
In fact, the the following two smooth sequences $\{u_j \}, \{v_j \}$ are both decreasing to $u$ 
$$ u_j = \log ( |z_1 z_2|^2 + 1/j ), $$
and 
$$v_j =\log(|z_1|^2 + 1/j) + \log(|z_2|^2 +1/j).$$
Then one can show that $(dd^c u_j)^2$ is zero for every $j$, whereas
$(dd^c v_j)^2$ converges weakly to $32\delta_0$, where $\delta_0$ is the Dirac mass at the origin.

On the other hand, it is easy to see that its integrability index at the origin is equal to $2$ since we have 
$$ e^{-2c u} =   |z_1|^{-4c} |z_2|^{-4c}.$$
Hence it follows that we have  $\nu_{\hat u}(0) = 4$ and $\tau_{\hat u} (0)  = 16 $.
\end{example}

For any $S^1$-invariant plurisubharmonic function $u$ with a single pole at the origin, its residue mass is always well defined.  
Then we can still ask a similar question about this domination phenomenon. 

\begin{conjecture}
Let $u$ be an $S^1$-invariant plurisubharmonic function with a single pole at the origin on a balanced domain $\Omega$, 
and $\hat u$ be its symmetrization. Then we have 
$$\tau_{\hat u}(0) \leq \tau_{u} (0). $$
\end{conjecture}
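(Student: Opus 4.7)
The plan is to first reduce the conjecture to a single comparison between the residue mass and the integrability index of $u$ at the origin. Combining formula (\ref{S1-010}) with the identity $\tau_{\hat u}(0) = [\nu_{\hat u}(0)]^n$ for radially symmetric plurisubharmonic functions with a single pole (equation (\ref{tor-0001})), we immediately obtain
$$ \tau_{\hat u}(0) \,=\, n^n\,[\iota_u(0)]^n, $$
so the conjecture is equivalent to the sharp lower bound
$$ \tau_u(0) \,\geq\, n^n\,[\iota_u(0)]^n \qquad (\star) $$
for every $S^1$-invariant plurisubharmonic function $u$ with an isolated pole at the origin on a balanced domain $\Omega$.

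For toric $u$ the inequality $(\star)$ was established in the proof of Theorem \ref{thm-main3} by pairing Rashkovskii's lower bound $\tau_u(0) \geq [\nu_u(0,a)]^n/(a_1\cdots a_n)$ with Kiselman's identity $\iota_u(0) = \sup\{\nu_u(0,a);\ a\in\bR^n_+,\ \sum a_j = 1\}$ and the AM--GM inequality. Rashkovskii's bound is valid for \emph{any} plurisubharmonic function with an isolated pole; the \emph{toric} hypothesis enters only through Kiselman's identity. My first attempt would therefore be to recover an analogue of Kiselman's identity adapted to the weaker $S^1$-symmetry. A natural auxiliary is the toric average
$$ u^{T}(z) \,=\, \frac{1}{(2\pi)^n}\int_{[0,2\pi]^n} u(e^{i\theta_1}z_1,\ldots,e^{i\theta_n}z_n)\,d\theta, $$
which is toric and plurisubharmonic on $\Omega$. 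Since averaging preserves the integrals defining the complex singularity exponent by Fubini, one expects the inequality $\iota_{u^T}(0) \leq \iota_u(0)$ to be accessible; the hard part is to compare $\tau_{u^T}(0)$ with $\tau_u(0)$ and to transfer $(\star)$ from $u^T$ back to $u$.

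Should this transfer fail, a second route is to work directly from the volumetric characterisation of the integrability index in identity (\ref{S1-010}), and to estimate $\tau_u(0)$ by slicing $(dd^c u)^n$ along the holomorphic disks $D_z = \{\lambda z : \lambda\in\bD\}$ that play the central role in the paper. The radial convexity of $t\mapsto u(e^t\zeta)$ given by Lemma \ref{lem-S1-002} provides tight one-dimensional control of $u$ on each such disk, and one would integrate a disk-wise Demailly-type lower bound for the residue mass against a positive measure on $\bC P^{n-1}$, designed so that the resulting integral recovers precisely the volume $|\{u<t\}|$ appearing in $\iota_u(0)$.

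The main obstacle in either route is the genuine gap between $\iota_u(0)$ and the directional Lelong numbers $\nu_u(0,a)$: for a non-toric $S^1$-invariant function, there is no reason for $\iota_u(0)$ to equal or even be close to $\sup_{a}\nu_u(0,a)$, so Kiselman's identity simply cannot be invoked directly. Bridging this gap requires either the construction of a new refined Lelong number adapted to the $S^1$-symmetry, together with a Rashkovskii-style inequality for it, or a Monge--Amp\`ere lower bound that uses $\iota_u(0)$ as input without passing through any directional Lelong number. I expect this last step to be the deep part of the argument, and very likely the reason the statement has been left as a conjecture rather than a theorem.
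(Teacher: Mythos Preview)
The statement in question is labeled a \emph{Conjecture} in the paper, and the paper contains no proof of it. Hence there is nothing against which to compare your proposal: the authors explicitly leave it open after proving the toric case (Theorem~\ref{thm-main3}) and suggesting that the same domination phenomenon should persist under the weaker $S^1$-invariance.

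Your reduction is correct and matches the logic the paper uses in the toric setting: combining equation~(\ref{tor-0001}) with identity~(\ref{S1-010}) shows that the conjecture is equivalent to the inequality $\tau_u(0)\geq n^n[\iota_u(0)]^n$. You also correctly isolate the obstruction: Rashkovskii's lower bound $\tau_u(0)\geq [\nu_u(0,a)]^n/(a_1\cdots a_n)$ holds for any plurisubharmonic function with an isolated pole, but Kiselman's identity $\iota_u(0)=\sup_a\nu_u(0,a)$ genuinely requires the full toric symmetry, so the argument of Theorem~\ref{thm-main3} does not transfer. Your two suggested routes (passing through the toric average $u^T$, or a direct slicing estimate along the disks $D_z$) are reasonable lines of attack, but as you yourself note, neither closes the gap, and you end by acknowledging that this missing step is precisely why the statement is a conjecture. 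That assessment is accurate; your proposal is not a proof, and the paper does not claim one.
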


\section{Appendix}
Let $\cR$ be the class of all radial, upper semi-continuous  functions on the unit ball $B\subset \bC^n$.
Denote $PSH^{\infty}(B)$ by the family of plurisubharmonic functions on $B$ with non-empty polar set. 
By the maximum principle, 
any $u\in \cR \bigcap PSH^{\infty}(B)$ is non-decreasing and has only a single pole at the origin. 

For such function, the measure $(dd^c u)^n$ is well defined \cite{Dem3}, 
and then we have the following relation between the residue mass
and the Lelong number at the origin.

\begin{prop}
\label{prop-app-001}
For any $u\in \cR \bigcap PSH^{\infty}(B)$, we have $\tau_u(0) = [ \nu_u (0)]^n$. 
\end{prop}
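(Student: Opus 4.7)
The plan is to sandwich $\tau_u(0)$ between the residue masses of the model potentials $v_c(z) := c\log|z|$, for which $(dd^c v_c)^n = c^n \delta_0$ and therefore $\tau_{v_c}(0) = c^n$. The link with $u$ is supplied by Demailly's comparison theorem \cite{Dem3}: if $\varphi$ and $\psi$ are plurisubharmonic with isolated poles at the origin and $\varphi \leq \psi + O(1)$ near $0$, then $\tau_\varphi(0) \geq \tau_\psi(0)$. Every $u \in \cR \cap PSH^\infty(B)$ has its unique pole at $0$ by radial symmetry and the maximum principle, so the hypothesis will be satisfied in all applications below.

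The first step is the reduction to one-variable convex analysis: write $u(z) = g(\log|z|)$ with $g$ convex and non-decreasing on $(-\infty,0]$, and identify $\gamma := g'(-\infty) = \nu_u(0)$ via the standard identity $\lim_{t \to -\infty} g(t)/t = g'(-\infty)$ for convex $g$. Two one-sided comparisons then follow. Since $g'(t) \geq \gamma$ on $(-\infty,0]$, the map $t \mapsto g(t) - \gamma t$ is non-decreasing and bounded above by $g(0)$, giving
$$ u(z) \leq \gamma \log|z| + g(0) \quad \text{on } B. $$
Conversely, for each $\varepsilon > 0$ the convergence $g'(t) \to \gamma$ as $t \to -\infty$ furnishes some $t_\varepsilon < 0$ with $g'(t) \leq \gamma + \varepsilon$ on $(-\infty, t_\varepsilon]$, whence $g(t) - (\gamma + \varepsilon) t$ is non-increasing on that half-line and
$$ u(z) \geq (\gamma + \varepsilon) \log|z| - C_\varepsilon \quad \text{on some ball } B_{r_\varepsilon}. $$

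Demailly's comparison applied to the first bound yields $\tau_u(0) \geq \tau_{v_\gamma}(0) = \gamma^n$, and applied with the roles exchanged to the second bound yields $\tau_u(0) \leq \tau_{v_{\gamma+\varepsilon}}(0) = (\gamma+\varepsilon)^n$ for every $\varepsilon > 0$; letting $\varepsilon \to 0^+$ closes the sandwich, so $\tau_u(0) = \gamma^n = [\nu_u(0)]^n$. The main conceptual subtlety is that a \emph{uniform} lower comparison $u(z) \geq \gamma \log|z| - C$ cannot in general be asserted: the quantity $g(t) - \gamma t$ is merely non-decreasing and may drift to $-\infty$ at $-\infty$, as already happens for $u(z) = \gamma\log|z| - \log(-\log|z|)$. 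The $\varepsilon$-perturbation is precisely what sidesteps this issue, and it is the one (admittedly mild) obstacle of the argument. The degenerate case $\gamma = 0$ is then absorbed automatically by the upper comparison alone, which with $c = \varepsilon$ forces $\tau_u(0) \leq \varepsilon^n \to 0$.
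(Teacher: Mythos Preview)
Your argument is correct, but it follows a genuinely different route from the paper's own proof. The paper proceeds by direct computation: it first regularises $u$ by a decreasing sequence of \emph{smooth} radial plurisubharmonic functions $u_j$ (the standard mollification preserves radial symmetry), then for a smooth radial $u$ writes $u(z)=f(\log|z|)$ and computes explicitly
\[
\det(u_{j\bar k}) \;=\; \frac{1}{n\,2^{n+1}}\,\frac{1}{r^{2n-1}}\,\bigl\{(r y')^n\bigr\}',
\]
so that $\int_{B_R}(dd^c u)^n = [R\,y'(R)]^n = [f'(\log R)]^n$ for every $R>0$, which matches the known expression for the Riesz-mass quotient and gives the result upon letting $R\to 0$.

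Your approach instead sandwiches $u$ between the explicit models $v_\gamma$ and $v_{\gamma+\varepsilon}$ and invokes Demailly's comparison theorem as a black box. This is shorter and more conceptual, and the paper already uses the comparison theorem elsewhere (Example~\ref{ex-tor-002}), so it is certainly within the allowed toolkit. The trade-off is that your proof presupposes the identity $(dd^c\, c\log|z|)^n = c^n\delta_0$ for the model potentials, whereas the paper's computation is self-contained and effectively rederives it. Your handling of the absence of a uniform lower bound $u\geq \gamma\log|z|-C$ via the $\varepsilon$-perturbation is exactly the right manoeuvre, and your remark on the degenerate case $\gamma=0$ is sound.
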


Before going to the proof, the following regularization technique is standard. 
\begin{lemma}
\label{lem-app-001}
For any $u\in \cR \bigcap PSH^{\infty}(B)$, there exits a sequence of smooth plurisubharmonic functions $u_j \in \cR $
decreasing to $u$. In particular, we have 
\begin{equation}
\label{app-001}
\int_K (dd^c u)^n = \lim_{j\rightarrow +\infty}  \int_K (dd^c u_j)^n,
\end{equation}
for any relative compact Borel subset $K$ of $B$.
\end{lemma}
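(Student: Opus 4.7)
The plan is to construct the approximating sequence by convolving $u$ with a radial mollifier and then to invoke the Bedford--Taylor--Demailly continuity theorem to pass to the limit in the Monge--Amp\`ere measures.

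I first construct the $u_j$. Pick a non-negative, smooth, compactly supported, radial kernel $\rho$ on $\bC^n\cong\bR^{2n}$ (i.e.\ $O(2n)$-invariant) with $\int\rho\,dV=1$, and set $\rho_\ep(z):=\ep^{-2n}\rho(z/\ep)$. Then $u_\ep:=u*\rho_\ep$ is smooth and plurisubharmonic on $B_{1-\ep}$, and it is radial: the change of variables $w\mapsto Uw$ with $U\in O(2n)$ in the convolution integral, combined with the $O(2n)$-invariance of $u$ and $\rho$, gives $u_\ep(Uz)=u_\ep(z)$. Writing $u_\ep(z)$ as an average of spherical means $M_u(z;\ep r)$ against the radial density $\rho(x)=g(|x|)$, the sub-mean-value property of plurisubharmonic functions (which makes $s\mapsto M_u(z;s)$ non-decreasing) yields that $\ep\mapsto u_\ep(z)$ is non-decreasing, while upper semi-continuity of $u$ gives $u_\ep(z)\to u(z)$ as $\ep\downarrow 0$. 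Thus $u_j:=u_{1/j}$, defined on the exhausting sub-balls $B_{1-1/j}$, furnishes the desired decreasing sequence inside $\cR$.

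Next I verify that the Bedford--Taylor--Demailly machinery applies. Since $u\in\cR$, write $u(z)=f(|z|)$ with $f:[0,1)\to[-\infty,\infty)$; Lemma \ref{lem-S1-002} implies $f$ is non-decreasing. If the polar set contained a point $z_0\neq\underline{O}$, then $f(r)=-\infty$ for all $0\leq r\leq|z_0|$, forcing $u\equiv-\infty$ on the open ball $B_{|z_0|}$, which contradicts $u\in PSH^\infty(B)$ being a genuine plurisubharmonic function. Hence the polar set equals $\{\underline{O}\}$, and $u$ is locally bounded on $B\setminus\{\underline{O}\}$. Consequently $(dd^c u)^n$ is well-defined by the Bedford--Taylor--Demailly construction, and Demailly's continuity theorem for decreasing limits of plurisubharmonic functions locally bounded off a compact set gives $(dd^c u_j)^n\to(dd^c u)^n$ in the weak topology of Radon measures on $B$.

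The final step upgrades weak convergence to the claimed equality $\int_K(dd^c u_j)^n\to\int_K(dd^c u)^n$ for a relatively compact Borel subset $K\subset B$, and I expect this to be the main subtlety. The Portmanteau-type argument gives the equality provided $(dd^c u)^n(\partial K)=0$. In our setting one always has the decomposition $(dd^c u)^n=\tau_u(0)\,\delta_{\underline{O}}+\mu_{ac}$, where $\mu_{ac}$ is absolutely continuous on $B\setminus\{\underline{O}\}$ with a density computable from the convex profile $f(t):=u(e^t)$; hence the boundary condition holds automatically for the closed balls and annuli with $\underline{O}\notin\partial K$ that appear in the proof of Proposition \ref{prop-app-001}. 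For full generality one could restate the lemma with the mild explicit hypothesis $|\partial K|=0$ together with $\underline{O}\notin\partial K$, or confine it to Borel sets of this form.
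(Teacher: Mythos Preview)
Your construction is the same as the paper's: convolve $u$ with a radial mollifier $\rho_\ep$ and check that the result remains radial by an orthogonal change of variables in the convolution integral. The paper's proof stops there, merely asserting the convergence in equation (\ref{app-001}) without further comment.

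You have in fact been more careful than the paper on two points. First, you explicitly verify the decreasing property via the monotonicity of spherical means, and you argue that the polar set is exactly $\{\underline{O}\}$ so that the Bedford--Taylor--Demailly theory applies; the paper takes both for granted. Second, and more importantly, you correctly flag that weak convergence $(dd^c u_j)^n\rightharpoonup(dd^c u)^n$ does not by itself yield $\int_K(dd^c u_j)^n\to\int_K(dd^c u)^n$ for an arbitrary relatively compact Borel set $K$: one needs $(dd^c u)^n(\partial K)=0$. The paper simply writes the equality without addressing this. Your observation that in the radial case the measure decomposes as a Dirac mass at the origin plus an absolutely continuous part, so that the boundary condition is automatic for the concentric balls $B_R$ used in Proposition~\ref{prop-app-001}, is exactly what is needed for the application and is a genuine improvement over the paper's treatment.
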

\begin{proof}
Let $\rho(x) $ be the standard cut-off function on $\bC^{n}$. 
That is to say, a smooth function $\rho$ is supported on the unit ball of $\bC^{n}$ 
with $\rho(x) = \rho(|x|)$ and $\int_{\bC^{n}} \rho(x) dx = 1$.
Denote $\rho_j$ by its rescaling as 
$ \rho_j (x)  = j^{2n} \rho \left( jx \right) $.
Consider the following regularization 
$$ u_j (x) = \int_{\bC^n} u (y-x) \rho_j (y) d\lambda(y), $$
and then $u_j$ is a sequence of smooth plurisubharmonic functions decreasing to $u$. 
Moreover, we claim that $u_j$ is radial.  

Let $x$ be a point in $B$.
Any other point $w$ which is differ from $x$ by a rotation can be written as 
$w = A\cdot x$, for some special orthogonal matrix $A$. 
Then we have for any $y\in \bC^n$ and $z = A^{-1}\cdot y $
\begin{eqnarray}
\label{app-002}
u_j(w) &=& 
\int_{\bC^n} u( y - Ax) \rho_j (y) d\lambda(y)
\nonumber\\
&=& \int_{\bC^n} u ( A^{-1}\cdot y - x ) \rho_j (y) d\lambda(y)
\nonumber\\
&= & \int_{\bC^n} u( z - x) \rho_j (z) d\lambda(z) 
\nonumber\\
&=& u_j(x).
\end{eqnarray}
The identity on the third line of equation (\ref{app-002}) is because 
the cut off function $\rho_j$ and the Lebesgue measure $d\lambda$ are both invariant under the action by $A$,
and our result follows. 

\end{proof}

\begin{proof}[Proof of Proposition (\ref{prop-app-001})]
By Lemma (\ref{lem-app-001}), it boils down to prove for any smooth radial plurisubharmonic function $u$ on $B$ we have 
\begin{equation}
\label{app-003}
\int_{B_R} (dd^c u)^n = \left\{ \frac{1}{a_{2n-2} R^{2n-2}}\int_{B_R} \frac{\Delta u}{ 2\pi} \right\}^n,
\end{equation}
for any small radius $R>0$.

Writing $|z| =r$ and $t = \log r$, the function $f(t) = y(r) = u(z)$ is convex for $t<0$.
Thanks to Theorem (2.32) in \cite{GZ},  
the RHS of equation (\ref{app-003}) is exactly equal to 
$$   [ R\cdot  \d_r y (R) ]^n  =  [ \d_t f(T) ]^n, $$
for almost everywhere $R\in [0, 1)$ and $T =\log R$.
 
On the other hand, we have 
$$ (dd^c u)^n = \frac{2^n n!}{\pi^n} \det (u_{j\bar k}) d\lambda, $$
and then an easy computation shows 
\begin{eqnarray}
\label{app-004}
\det(u_{j\bar k}) &=& \frac{1}{2^{n+1}} \left( y'' + \frac{y'}{r} \right) \left( \frac{y'}{r} \right)^{n-1}
\nonumber\\
&=& \frac{1}{n 2^{n+1}} \frac{1}{r^{2n-1}} \{ (ry')^n \}'.
\end{eqnarray}
Using the spherical coordinate, 
 the LHS of equation (\ref{app-003}) is equal to the following 
\begin{eqnarray}
\label{app-005}
\int_{B_R} (dd^c u)^n  &=&  2\pi a_{2n-2}\int_0^R \frac{(n-1)!}{2\pi^n} \det(u_{j\bar k})r^{2n-1}dr
\nonumber\\
&= & \int_0^R \{ (ry')^n \}' dr
\nonumber\\
&=&   \left[ R \frac{\d y}{\d r} (R) \right]^n,
\end{eqnarray}
since $a_{2n-2} = \frac{\pi^{n-1}}{ (n-1)!}$, and we conclude the proof.

\end{proof}

For radial plurisubharmonic functions, there is also a simple relation between the Lelong number and  the integrability index at the origin. 
\begin{prop}
\label{prop-app-002}
For $u\in \cR \bigcap PSH^{\infty}(B)$, we have $ \nu_u(0) =  n\iota_u(0) $. 
\end{prop}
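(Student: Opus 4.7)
The plan is to reduce everything to one-variable convex analysis and then compute the threshold for the integrability of $e^{-2cu}$ directly. Since $u \in \cR \cap PSH^\infty(B)$ is radial and non-decreasing, we may write
\[
u(z) = y(|z|) = f(\log|z|),
\]
where $f:(-\infty,0)\to\bR$ is convex and non-decreasing (standard: a radial upper semi-continuous function is psh iff $f$ is convex and non-decreasing in $t = \log r$), and $f(t)\to -\infty$ as $t\to -\infty$ since $u$ has a pole at the origin. As noted in the paragraphs preceding Theorem \ref{thm-main}, the left derivative $f'(t)$ is non-decreasing and
\[
\nu := \lim_{t\to -\infty} f'(t) = \nu_u(0),
\]
so the plan is to show that $\cC_u(0) = n/\nu$ (interpreted as $+\infty$ when $\nu=0$), which is equivalent to $\iota_u(0) = \nu/n$.

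The second step is to rewrite the integrability condition in one variable. Using polar coordinates on $B$ and substituting $r = e^t$ gives
\[
\int_B e^{-2cu(z)}\, d\lambda(z) = 2na_{2n} \int_0^1 e^{-2cf(\log r)} r^{2n-1}\,dr = 2na_{2n}\int_{-\infty}^0 e^{-2cf(t) + 2nt}\,dt.
\]
So $\cC_u(0) = \sup\{ c\geq 0 : \int_{-\infty}^0 e^{-2cf(t) + 2nt}\,dt < \infty\}$, and the only issue is integrability at $t=-\infty$ (the integrand is continuous near $t=0$).

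The third step is to use convexity to pin down the exponent. Writing $f(t) = \nu t + g(t)$, convexity together with $f'(t)\downarrow \nu$ as $t\to -\infty$ gives $g(t)/t \to 0$; more quantitatively, for any $\ep>0$ there exists $T<0$ such that for all $t<T$,
\[
(\nu+\ep)\,t \;\leq\; f(t) \;\leq\; \nu\, t + C_\ep,
\]
(recall $t<0$, so $\nu t \geq (\nu+\ep)t$). Substituting into the exponent,
\[
(2n - 2c(\nu+\ep))\,t - 2cC_\ep \;\leq\; -2cf(t) + 2nt \;\leq\; (2n - 2c\nu)t.
\]
Since $t<0$, the integrand decays exponentially at $-\infty$ exactly when the coefficient of $t$ is positive. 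Thus for $c < n/(\nu+\ep)$ the upper bound gives integrability, and for $c > n/\nu$ the lower bound (taking $\ep$ small so that $2n - 2c(\nu+\ep) < 0$) gives non-integrability. Letting $\ep\to 0$ yields $\cC_u(0) = n/\nu$ when $\nu>0$. When $\nu = 0$, the upper bound $f(t)\leq \ep t + C_\ep$ makes the exponent controllable by $(2n-2c\ep)t$ for every $c>0$, so $\cC_u(0) = +\infty$ and $\iota_u(0) = 0 = \nu/n$, as required.

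This is essentially all there is to it: no step is hard in isolation, and the main thing to be careful about is the sign bookkeeping in Step three, since $t<0$ reverses inequalities and one has to pick $\ep$ small enough on each side so that the upper and lower convex bounds for $f$ produce, respectively, a genuinely decaying and a genuinely growing exponential integrand. That sign bookkeeping is the only real obstacle; once it is done cleanly, the identity $\iota_u(0) = \nu_u(0)/n$ drops out.
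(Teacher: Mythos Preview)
Your argument is correct and follows the same reduction as the paper: pass to polar coordinates, write $u(z)=f(\log|z|)$ with $f$ convex and non-decreasing, and reduce the question to the convergence of $\int_{-\infty}^{T}e^{-2cf(t)+2nt}\,dt$. The paper then invokes Skoda's inequality (so that only the direction ``$c<n/\nu\Rightarrow$ integrable'' needs to be checked) and finishes with an integration-by-parts trick using the monotonicity of $f'$ and $f(t)/t$; you instead prove both directions directly via the linear bounds $(\nu+\ep)t\le f(t)\le \nu t+C$, which is a slightly more elementary endgame and avoids quoting Skoda.

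One small slip to clean up: your displayed inequality for the exponent has the two sides swapped. From $(\nu+\ep)t\le f(t)\le \nu t+C_\ep$ and $-2c<0$ one gets
\[
(2n-2c\nu)\,t-2cC_\ep \;\le\; -2cf(t)+2nt \;\le\; (2n-2c(\nu+\ep))\,t,
\]
not the reverse. With the corrected bounds the argument is cleaner: the upper bound immediately gives integrability for every $c<n/(\nu+\ep)$ (hence for all $c<n/\nu$ by letting $\ep\downarrow 0$), and the lower bound gives divergence for every $c>n/\nu$ with no need to ``choose $\ep$ small''. Your conclusion is unaffected by the swap, but fixing it removes the slightly awkward parenthetical about $\ep$ in the non-integrability step.
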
 
\begin{proof}
By the estimate (equation (\ref{intro-000})), it is enough to prove for any $ 0< c  < n\nu^{-1}_u(0)$, the integral 
$$ \int_{B_R} e^{-2c u }  d\lambda$$
is finite for $R>0$ small enough. Writing $f(t) = u (z)$ and $T = \log R$ as usual, the convex function $f(t)$ is Lipschitz continuous,
and then we can apply the fundamental theorem of Calculus as 
\begin{eqnarray}
\label{app-006}
&&\frac{1}{2\pi a_{2n-2}} \int_{B_R} e^{-2c u} d\lambda = \int_{-\infty}^T e^{-2c f} e^{2nt} dt
\nonumber\\
&=& e^ {2nt  \left( 1- \frac{cf}{nt}  \right) } \big|_{- \infty}^{T} + \frac{c}{n} \int_{-\infty}^T f'(t) e^{ -2cf + 2nt } dt.
\end{eqnarray}
Notice that the two positive functions $f'(t)$ and $t^{-1}f(t)$ are non-decreasing, and both converge to $\nu_u(0)$ as $t\rightarrow -\infty$.
By  our assumption on $c$,
there exists some $t_0 < 0$ such that we have
$$  \min\{ 1 - \frac{c}{n} f'(t), 1-\frac{c}{nt} f(t)  \} > \ep,$$
for some $\ep >0$ small and all $t< t_0$.
Picking up $T  =  t_0 -1 $, we have 
\begin{eqnarray}
\label{app-007}
e^ {2nt  \left( 1- \frac{cf}{nt}  \right) } \big|_{- \infty}^{T} &=&   \int_{-\infty}^T  ( 1-  \frac{c}{n}f'(t) ) e^{ 2nt \left( 1 - \frac{cf}{nt} \right) } dt
\nonumber\\
&\geq&  \ep \int_{-\infty}^T  e^{ -2cf + 2nt} dt.
\end{eqnarray}
Since $ \ep <  1- \frac{cf}{nt}  < 1$ for all $t\in (-\infty, T)$, the LHS of equation (\ref{app-007}) is bounded above by $e^{2nT}$,
and our result follows.

\end{proof}

\begin{bibdiv}
\begin{biblist}

\bib{BB}{article}{
   author={Berman, R.},
   author={Berndtsson, B.}
   title={Symmetrization and plurisubharmonic  and convex functions},
   journal={Indiana University Mathematics Journal}, 
   volume={63}
   date={2014}
   number={2}
   pages={345-365}
}

\bib{Bo1}{article}{
   author={Berndtsson, B.}
   title={The openness conjecture for plurisubharmonic functions },
   journal={arXiv:1305.5781}
}

\bib{Ceg1}{article}{
   author={Cegrell, U.}
   title={Sums of continuous plurisubharmonic functions and the complex Monge-Ampère operator in $\bC^n$ },
   journal={Math. Z.}
   date={1986}
   volume={193}
   number={3}
   pages={373-380}
}

\bib{Ceg2}{article}{
   author={Cegrell, U.}
   title={Pluricomplex energy. },
   journal={Acta Math. }
   volume={180}
   date={1998}
   number={2}
   pages={187-217}
}

\bib{Dem3}{book}{
   author={Demailly, J.-P.},
   title={Monge-Amp\`ere operators, Lelong numbers and intersection theory },
   series= {Complex Analysis and Geometry},
   publisher={Springer Boston  MA},
   date={1993}
   number={}
   pages={}
}

\bib{abook}{article}{
   author={Demailly, J.-P.},
   title={Complex analytic and differential geometry },
   journal={manuscripts}
   date={}
   number={}
   pages={}
}

\bib{DK}{article}{
   author={Demailly, J.-P.},
   author={Koll\'ar, J.}
   title={Semi-continuity of complex singularity exponents and Kähler-Einstein metrics on Fano orbifolds },
   journal={Annales scientifiques de l'\'Ecole Normale Supérieure}
   volume={34}
   series={4}
   date={2001}
   number={4}
   pages={525-556}
}

\bib{GZ}{book}{
   author={Guedj, V.}
   author={Zeriahi, A.}
   title={Degenerate complex Monge-Amp\`ere equations},
   series={ EMS Tracts in Mathematics }
   volume={26}
   date={2017}
   publisher={  EMS Publishing House},
   isbn={978-3-03719-167-5}
   doi={10.4171/167}
   number={}
   page={}
}

\bib{Kes}{book}{
   author={Kesavan, S.}
   title={Symmetrizations and applications},
    publisher={World Scientific},
   volume={}
   date={2006}
   isbn ={981-256-733-X}
   number={}
}

\bib{Kis1}{book}{
   author={Kiselman, C.O.}
   title={Sur la d\'efinition de l'op\'erateur de Monge-Amp\`ere complexe},
   series={Proc. Analyse Complexe, Toulouse }
   volume={1094}
   date={1983}
   publisher={Springer Verlag  Berlin},
   number={}
   page={139-150}
}

\bib{Kis3}{article}{
   author={Kiselman, C.O.}
   title={Un normbre de Lelong raffin\'e},
   journal={S\'eminaire d'Analyse Complexe et G\'eom\'etrie 1985-87, Fac.Sci. Monastir Tunisie}
   volume={}
   date={1987}
   number={61-70}
}

\bib{Kis}{article}{
   author={Kiselman, C.O.}
   title={Attenuating the singularities of plurisubharmonic functions},
   journal={Ann. Polon. Math. }
   volume={LX.2}
   date={1994}
   number={173-197}
}

\bib{Rash}{article}{
   author={Rashkoskii, A.}
   title={Newton numbers and residual measures of plurisubharmonic functions},
   journal={Annales Polonici   Mathematic},
   volume={LXXV.3 }
   date={2000}
   number={}
}

\bib{Rash1}{article}{
   author={Rashkoskii, A.}
   title={Plurisubharmonic functions with multicircled singularities},
   journal={Visnyk Nark. Nats. Univ. }
   volume={475}
   date={2000}
   pages={162-169}
}

\bib{Skoda}{article}{
   author={S, Yum-Tong}
   title={Analyticity of sets associated to Lelong numbers and the extension of closed positive currents},
   journal={Invent. Math. }
   volume={27}
   date={1974}
   number={53-156}
}

\bib{Siu}{article}{
   author={Siu, Yum-Tong}
   title={Analyticity of sets associated to Lelong numbers and the extension of closed positive currents},
   journal={Invent. Math. }
   volume={27}
   date={1974}
   number={53-156}
}

\bib{Sko}{article}{
   author={Skoda, H.},
   title={Sous-ensembles analytiques d'ordre fini ou infini dans $C^n$},
   journal={ Bull. Soc. Math. France  }
   number={100}
   date={1972}
   pages={353-408}
}

\end{biblist}
\end{bibdiv}

\end{document}